\title[From Erd\H os--R\'enyi graphs to pure complexes ]{From Erd\H os--R\'enyi graphs to Linial-Meshulam complexes via the multineighbor construction}
\author{Eric Babson}
\address{Department of Mathematics,
              University of California at Davis,
              California 95616,
              U.S.A.}
\email{babson@math.ucdavis.edu}
\author{Jan Spali\'nski}
\address{Faculty of Mathematics and Information Science,
              Warsaw University of Technology,
              Koszykowa 75,
              00-662 Warsaw,
              Poland}
\email{jan.spalinski@pw.edu.pl}
\subjclass[2020]{Primary: 05C80; secondary: 62R99.}
\newtheorem{lemma}{Lemma}
\newtheorem{prop}{Proposition}
\newtheorem{thm}{Theorem}
\newtheorem{cor}{Corollary}
\newtheorem{conj}{Conjecture}
\theoremstyle{definition}\newtheorem{defin}{Definition}
\def\e#1{{\bf E}(#1)\ }
\def\bin#1#2{\textbf{Bin}_{#1,#2}}
\def\PP{\mathbb{P}}
\def\EE{\mathbb{E}}
\def\NN{\mathbb{N}}
\def\E{\mathbb{E\,}}
\def\e{\textrm{e}}
\def\var{\textrm{Var\,}}
\def\bigxf{X_{\mathcal F}}
\begin{document}

\begin{abstract} The {\it $m$-neighbor complex} of a graph is the simplicial complex in which faces are sets of vertices with at least $m$ common neighbors.  We consider these complexes for Erd\H os-R\'enyi random graphs and find that for certain explicit families of parameters the resulting complexes are with high probability $(t-1)$-dimensional with all $(t-2)$-faces and each $(t-1)$-face present with a fixed probability.  Unlike the Linial-Meshulam measure on the same complexes there can be correlations between pairs of $(t-1)$-faces but we conjecture that the two measures converge in total variation for certain parameter sequences.  
\end{abstract}

\keywords{
Random graph, random complex, neighborhood complex of a graph, m-neighbor complex.
}

\maketitle

\section{Introduction} The {\it $m$-neighbor complex} $N_m(G)$ of a graph $G$ is the simplicial complex in which faces are sets of vertices with at least $m$ common neighbors. 
This construction is studied in detail in \cite{MS}.
We consider these complexes for Erd\H os-R\'enyi random graphs \cite{ER} and find that for certain explicit families of parameters the resulting complexes are with high probability $(t-1)$-dimensional with all $(t-2)$-faces and each $(t-1)$-face is present with a fixed probability.  Unlike the Linial-Meshulam (\cite{LM1}, \cite{ALLM}) measure on the same complexes there can be correlations between pairs of $(t-1)$-faces but we conjecture that the two measures converge in total variation for certain parameter sequences.  

\section{Preliminaries} We recall some well known facts and fix notation.

Write $[n]=\{1,2,\ldots, n\}$.  If $A$ is a finite set write $|A|$ for its cardinality, $\mathcal P A$ for the set of its subsets and ${A\choose c}\subseteq \mathcal P A$ for those with cardinality $c$.  If $X$ and $W$ are both graphs or both simplicial complexes write $Z_XW$ for the set of injective maps from $W$ to $X$, $z_xw=z_XW=|Z_XW|$ for their number which depends only on the shapes $x$ and $w$ of $X$ and $W$ as defined in section 4 below and if $Z_XW\not=\emptyset$ say that $X$ contains a copy of $W$.  If $G$ is a graph write $VG$ and $EG$ for its vertices and edges.   If $X$ is a simplicial complex write $\bigxf$ for the set of facets and $X_0$ for the set of vertices. 

A random variable $B$ is said to have a binomial distribution $B\sim \bin n  q$ if
\[        \mathbb P(B=k) = {n \choose k} q^k (1-q)^{(n-k)},\qquad k=0,\dots,n.                \]

The mean and variance are given by $\mu  = nq$ and $\sigma^2 = nq(1-q)$.
We will use the following bounds which are proven in the final section.   

{\bf Hoeffding's Inequalities:}\ 
{\it If $B\sim\bin n q$ then:
\begin{itemize}
\item $\PP(B\leq m)\leq \exp[-\frac{2}{n}(nq-m)^2]$ if $m<nq$ and 
\item $\PP(B\geq m)\leq \exp[-\frac{2}{n}(nq-m)^2]$ if $m>nq$.  
\end{itemize}
}

As an illustration, we include images of two small graphs and the 1-skeletons of the associated $1$- and $2$-neighbor complexes.

\begin{figure}[h!]
\centering
\includegraphics[width=1\linewidth]{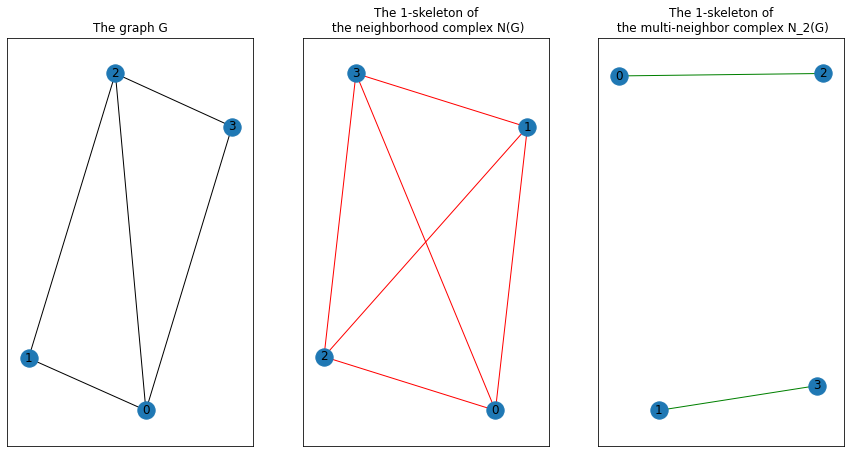}
\caption{A graph and the 1-skeleta of the $m$-neighbor complexes for $m=1$  and $m=2$.}
\end{figure}

\begin{figure}[h!]
\centering
\includegraphics[width=1\linewidth]{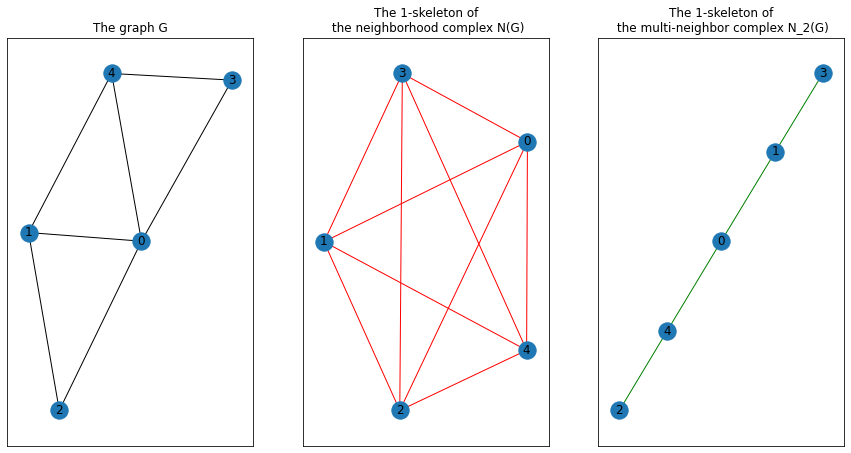}
\caption{A graph and the 1-skeleta of the $m$-neighbor complexes for $m=1$  and $m=2$.}
\end{figure}
\eject

\section{Support of $N_m(G(n,p))$}

Take $n$ to be a positive integer and $p\in (0,1)$ a probability and consider the Erd\H os--R\'enyi probability measure $G(n,p)$ on graphs with vertex set $[n]$ and each edge introduced independently with probability $p$. Consider $\Gamma_{n,m,p}=N_m G(n,p)$ the probability measure on simplicial complexes which is the $m$-neighbor complex of a random graph from $G(n,p)$.  Below is a picture of an Erd\H os--R\'enyi graph with parameters $n=100$ and $p=0.31$ on the left and the 1-skeleton of its $14$-neighbor complex  on the right.  

\begin{figure}[h!]
\centering
\includegraphics[width=1\linewidth]{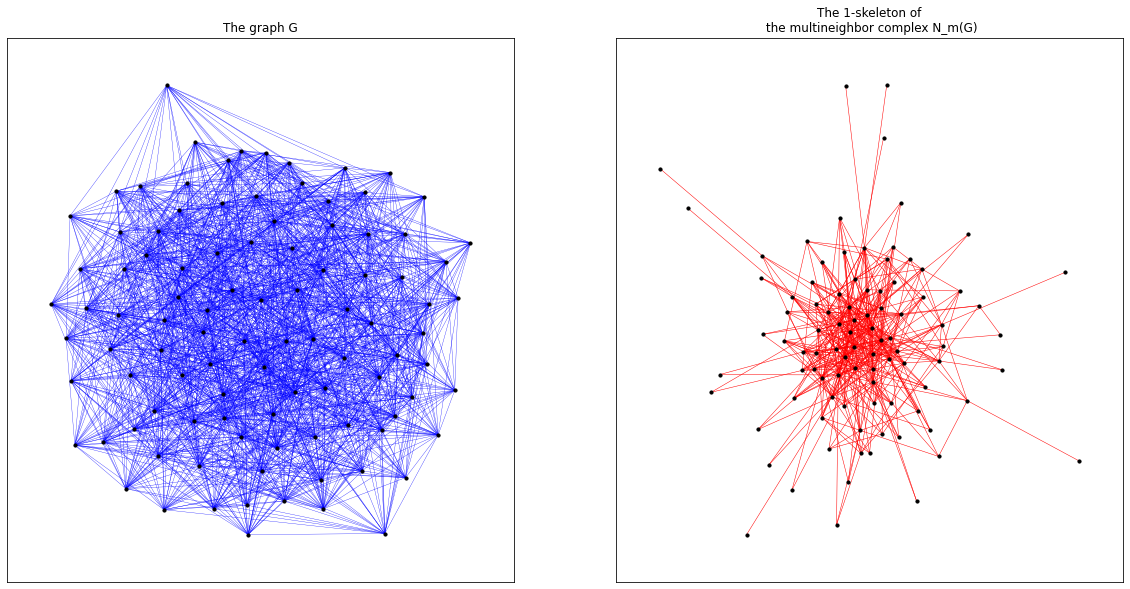}
\caption{An Erd\H os--R\'enyi graph from G(100,0.31) and the 1-skeleton of the 14-neighbor complex.}
\end{figure}

Given $n$, $m$ and $p$ let $t$ be the number defined as follows:
\[                 t = \left[\!\left[   \frac{\ln(n)-\ln(m)}{-\ln(p)}     \right]\!\right]            = \left[\!\left[   \log_p\left(\frac{m}{n}\right)\right]\!\right]                    \]
with $[[\cdot]]$ meaning a closest integer, taking the smaller choice if there are two possibilities.   Next,  take $\tau$ with $|\tau|\leq\frac{1}{2}$ to be the difference between $t$ and the expression being rounded:
\[         \tau =    t  +   \frac{\ln(n)-\ln(m)}{\ln(p)}     = t + \log_p \left( \frac{n}{m}  \right)               \]
so that
\[    p^t = p^{ \log_p \left( \frac{m}{n}  \right) +\tau }  = \left(\frac{m}{n}\right)p^\tau.     \]

Let $Y_{n,k-1}$ be the set of simplicial complexes with:
\begin{itemize}
\item vertex set $[n]$, 
\item all faces with $k-1$ vertices and
\item no faces with $k+1$ vertices.
\end{itemize}
Hence all but one complex in $Y_{n,k-1}$ is $(k-1)$-dimensional. 
Let $Y_{n,k-1,q}$ be the Linial-Meshulam probability distribution on $Y_{n,k-1}$ so that:  
\begin{itemize}
\item faces with $k$ vertices occur independently with probability $q$.
\end{itemize}
We show that for a large range of choices of $m$ and $n$, with high probability the complexes drawn from $\Gamma_{n,m,p}$ belong to $Y_{n,t-1}$ with $t=\left[\!\left[   \log_p\left(\frac{m}{n}\right)\right]\!\right] $ as above and further the probability that such a complex contains any particular $(t-1)$-face is the probability that $B\sim\bin {n-t} {p^t} $ takes a value of at least $m$.  Call this face probability $q$.  The distributions $\Gamma_{n,m,p}$ and $Y_{n,t-1,q}$ differ in that in $\Gamma$ face occurances may be correlated while in $Y$ they are not.    

\begin{lemma} If $p\in (0,1)$ there is $c>0$ (with $c=\frac{1}{2}$ if $p\le\frac{1}{4}$) so that for any $n$, $m$ and $f\in{[n]\choose t+1}$ with $t=\left[\!\left[\log_p\left(\frac{m}{n}\right)\right]\!\right]$ as above $\PP_{K\in \Gamma_{n,m,p}}(f\in K)\leq\exp\left[ -c\frac{m^2}{n} \right]$.
\end{lemma}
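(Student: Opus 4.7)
The plan is to observe that whether a fixed $(t+1)$-set $f$ is a face of $N_mG$ depends only on the edges between $f$ and its complement in $G$, which are mutually independent. This reduces the question to a binomial tail bound which is controlled directly by Hoeffding.

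First I would observe that for $G\sim G(n,p)$ and $f\in\binom{[n]}{t+1}$, a vertex $v\in[n]\setminus f$ is adjacent in $G$ to every vertex of $f$ with probability exactly $p^{t+1}$, and these events are mutually independent as $v$ ranges over $[n]\setminus f$ (since they involve disjoint edge sets). Consequently the number $B$ of common neighbors of $f$ satisfies $B\sim\bin{n-t-1}{p^{t+1}}$, and $\PP_{K\in\Gamma_{n,m,p}}(f\in K)=\PP(B\geq m)$.

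Next I would bound the mean $\mu=(n-t-1)p^{t+1}$. Using the identity $p^t=(m/n)p^\tau$ from the preliminaries and $n-t-1\leq n$,
\[
\mu\leq np^{t+1}=mp^{1+\tau}\leq m\sqrt p,
\]
the final inequality using $\tau\geq-\tfrac12$ together with $0<p<1$. In particular $\mu<m$, so Hoeffding's upper-tail inequality applies and yields
\[
\PP(B\geq m)\leq\exp\!\left[-\frac{2(m-\mu)^2}{n-t-1}\right]\leq\exp\!\left[-\frac{2(1-\sqrt p)^2 m^2}{n}\right],
\]
using $m-\mu\geq m(1-\sqrt p)$ and $\tfrac{1}{n-t-1}\geq\tfrac{1}{n}$. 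Setting $c=2(1-\sqrt p)^2>0$ proves the bound; when $p\leq\tfrac14$ one has $\sqrt p\leq\tfrac12$, so $c\geq\tfrac12$.

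There is no genuinely hard step here: the whole argument is a direct Hoeffding computation once the binomial structure is identified, and the role of the definition of $t$ is merely to guarantee $\mu\leq m\sqrt p$ via the bound on $\tau$. The only things to watch are the direction of the inequality $\tfrac{1}{n-t-1}\geq\tfrac{1}{n}$ (so that enlarging the denominator weakens the bound in the way we want) and the vacuous boundary cases ($n<t+1$ or $m=0$), which cause no trouble.
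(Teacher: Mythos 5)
Your proof is correct and follows essentially the same route as the paper's: identify the number of common neighbors as $B\sim\bin{n-t-1}{p^{t+1}}$, use $p^t=\frac{m}{n}p^{\tau}$ with $|\tau|\le\frac12$ to show $\mu\le m\sqrt p<m$, and apply Hoeffding's upper-tail bound together with $\frac{1}{n-t-1}\ge\frac1n$. The only (harmless) difference is that you treat all $p\in(0,1)$ uniformly with $c=2(1-\sqrt p)^2$, observing $c\ge\frac12$ when $p\le\frac14$, whereas the paper splits into the cases $p\le\frac14$ and $p>\frac14$ to exhibit the same constants.
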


\begin{proof}  If $\Gamma$ is a graph with $V\Gamma=[n]$ and $f\in{[n]\choose t+1}$ write $\beta_f\Gamma=|\{v\in V\Gamma-f| \{v\}\times f\subseteq E\Gamma\}|$ for the number of common neighbors so $\beta_fG(n,p)=B\sim \bin {n-t-1}{p^{t+1}}$ and $\PP_{K\in \Gamma_{n,m,p}}(f\in K)=\PP(B\geq m)$.
First consider the case $p\leq\frac{1}{4}$ and take $c=\frac{1}{2}$. 
  In order to apply Hoeffding's inequality, we take $|\tau|\leq\frac{1}{2}$ as above so $p^{t} = \frac{m}{n} p^{\tau}$ and verify that if $|f|=t+1$ then $\mu=\E B<m$: 
\[    \mu=(n-(t+1))p^{t+1} = (n-(t+1))\cdot\frac{m}{n}\cdot p^{\tau+1} = m \cdot\frac{n-(t+1)}{n}\cdot p^{\tau+1} < m.       \]

Moreover, since $p\in(0,\frac{1}{4}]$, we have
\[        0\le  \frac{m}{n} p^{\tau+1}\le \frac{1}{2}\left( \frac{m}{n-(t+1)} \right).         \]

Hence we have:

\begin{align*}
\PP_{K\in \Gamma_{n,m,p}}(f\in K) &= \PP(B\ge m) \\
                                                &\le \exp\left[-2(n-t-1)\left(p^{t+1}-\frac{m}{n-(t+1)}\right)^2\right]      \\
                                                &\le \exp\left[-2(n-t-1)\frac{1}{4}\left(\frac{m}{n-(t+1)}\right)^2\right]   \\
                                                &\le \exp\left[-\frac{1}{2}\left(\frac{m^2}{n-(t+1)}\right)\right]       \\
                                                &\le \exp\left[-\frac{1}{2}\left(\frac{m^2}{n}\right)\right].       \\
\end{align*}

Finally for the case $p>\frac{1}{4}$ take $c=2(1-p^{\frac{1}{2}})^2$.
The argument is analogous to the $p\leq\frac{1}{4}$ case upon noting that $0<p^{\tau+1}<p^{\frac{1}{2}}$ and $\frac{m}{n-(t+1)} \ge \frac{m}{n}$ and hence
\[         \frac{m}{n-(t+1)} - \frac{m}{n} p^{\tau+1} \ge \frac{m}{n-(t+1)} - \frac{m}{n-(t+1)} p^{\tau+1}  \ge\frac{m}{n-(t+1)}  \left( 1-p^{\frac{1}{2}} \right).               \] 

\end{proof}

\begin{lemma} If $p\in (0,1)$ there is $c>0$ (with $c=\frac{1}{3}$ if $p\le\frac{1}{4}$) so that for any $n\ge 9$, $m$ and $f\in{[n]\choose t-1}$ with $t=\left[\!\left[\log_p\left(\frac{m}{n}\right)\right]\!\right]$ as above $\PP_{K\in \Gamma_{n,m,p}}(f\not\in K)\leq\exp\left[ -c\frac{m^2}{n} \right]$.
\end{lemma}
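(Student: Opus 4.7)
The plan mirrors Lemma~1, but applies Hoeffding's \emph{lower}-tail inequality. For $f\in\binom{[n]}{t-1}$ the common-neighbor count satisfies $\beta_f G(n,p)\sim B:=\bin{n-t+1}{p^{t-1}}$, and $f\in K$ precisely when $B\ge m$, so $\PP_{K\in\Gamma_{n,m,p}}(f\not\in K)=\PP(B\le m-1)$. From $p^t=(m/n)p^\tau$ we obtain $p^{t-1}=(m/n)p^{\tau-1}$, so
\[
  \mu=\EE B=(n-t+1)\,\frac{m}{n}\,p^{\tau-1},
\]
and since $|\tau|\le\tfrac12$ and $p<1$ we have $p^{\tau-1}\ge p^{-1/2}$; in particular $\mu/m\ge\bigl((n-t+1)/n\bigr)p^{-1/2}$.

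For $p\le\tfrac14$ one has $p^{-1/2}\ge 2$, so $\mu\ge 2m(n-t+1)/n$. After checking the mild condition $m-1<\mu$ (which holds under the hypotheses because $p\le\tfrac14$ forces $t\le\log_4 n+\tfrac12$), Hoeffding's lower tail gives
\[
  \PP(B\le m-1)\le\exp\!\left[-\frac{2(\mu-m)^2}{n-t+1}\right]\le\exp\!\left[-\frac{2m^2(n-2t+2)^2}{n^2(n-t+1)}\right],
\]
using the bound $\mu-m\ge m(n-2t+2)/n$ that comes directly from $\mu\ge 2m(n-t+1)/n$. Achieving $c=\tfrac13$ reduces to the purely numerical inequality $6(n-2t+2)^2\ge n(n-t+1)$, which I would verify from the logarithmic bound on $t$ together with the hypothesis $n\ge 9$.

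For $p>\tfrac14$ the same outline applies with the factor $2$ replaced by $1/\sqrt p$: the excess $\mu-m$ is of order $(p^{-1/2}-1)m$ modulo corrections in $t/n$, and Hoeffding yields a bound of the form $\exp[-c(p)m^2/n]$ for an explicit $c(p)>0$, analogous to the constant $2(1-\sqrt p)^2$ appearing in Lemma~1. The main obstacle I anticipate is exactly this second case: as $p\to 1$ both the multiplicative excess $p^{-1/2}-1$ and the admissible window for $t$ shrink, so keeping $\mu-m$ genuinely of order $m$ uniformly in the parameters requires either an implicit size constraint relating $t$ to $n$ and $p$, or a careful direct computation with $\mu=(n-t+1)p^{t-1}$ absorbing the full correction into $c(p)$. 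The $p\le\tfrac14$ regime sidesteps this bookkeeping because $t$ is only logarithmic in $n$.
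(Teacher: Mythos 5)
Your proposal is correct and follows essentially the same route as the paper: identify $\PP_{K\in\Gamma_{n,m,p}}(f\not\in K)$ as a lower tail of $B\sim\bin{n-t+1}{p^{t-1}}$, show $\mu=\EE B$ exceeds $m$ by a definite multiplicative margin using $p^{\tau-1}\ge p^{-1/2}$ (hence $\ge 2$ when $p\le\frac{1}{4}$) together with the logarithmic bound on $t$, and then apply Hoeffding's lower-tail inequality; the numerical check you defer, $6(n-2t+2)^2\ge n(n-t+1)$, does hold for $n\ge 9$ under that bound on $t$, recovering the constant $c=\frac{1}{3}$ just as the paper's slightly different bookkeeping (via $\frac{n}{n-(t-1)}\le\frac{3}{2}$ and $\frac{n-(t-1)}{n}\ge\frac{2}{3}$) does. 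Your concern about the regime $p>\frac{1}{4}$ is exactly how the paper handles it: it sets $\varepsilon=p^{-1/2}-1$, takes $c=\frac{1}{3}(p^{-1/2}-1)^2$, and obtains the estimate only for $n$ beyond a $p$-dependent threshold $N$ at which $\frac{n}{n-(t-1)}\le 1+\frac{\varepsilon}{2}$, i.e.\ it absorbs the correction by sacrificing half of the multiplicative margin.
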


\begin{proof}  Similarly to the previous proof  if $f\in{[n]\choose t-1}$  then $\PP_{K\in \Gamma_{n,m,p}}(f\not\in K)=\PP(B<m)$ with $B\sim \bin {n-(t-1)}{p^{t-1}}$.
Once again consider first the case $p\leq\frac{1}{4}$ and take $c=\frac{1}{3}$.  
  Note that
\[  t-1 =  \left[\!\left[   \frac{\ln(n)-\ln(m)}{-\ln(p)}    \right]\!\right]   -1        \le         \frac{\ln(n)}{\ln(p^{-1})} \le \frac{\ln(n)}{\ln 4} \le \ln(n) \le \sqrt{n}.   \]
Hence
\begin{equation}
    \frac{n}{n-(t-1)}   \le  \frac{n}{n-\sqrt{n}}    \le   \frac{n(n+\sqrt{n})}{n^2-n} \le \frac{n+\sqrt{n}}{n-1}.   \label{lemma_two_inequality}   
\end{equation}
The function on the right hand side above is clearly decreasing (for $n>1$), and has value  $\frac{3}{2}$ for $n=9$, hence the left hand side above is bounded
by $\frac{3}{2}$ for all $n\ge 9$. Moreover with $|\tau|\leq\frac{1}{2}$ as above, $p^{\tau-1}\ge 2$. Hence we have
\begin{align*}
p^{\tau-1} - \frac{n}{n-(t-1)} &\ge \frac{1}{2}, \\
\frac{p^{\tau-1}}{n} - \frac{1}{n-(t-1)} &\ge \frac{1}{2n}. \\
\end{align*}

In order to apply Hoeffding's inequality, we verify that $\mu=\E B>m$. 
\[    \mu=(n-(t-1))p^{t-1} = (n-(t-1))\cdot\frac{m}{n}\cdot p^{\tau-1} = m \cdot\frac{n-(t-1)}{n}\cdot p^{\tau-1} > m,       \]
where the last inequality follows from the estimates of the factors in the previous paragraph.
Hence we have.

\begin{align*}
  \PP_{K\in\Gamma_{n,m,p}}(f\not\in K) & = \PP(B\le m-1) \\
  & \le \PP(B\le m) \\
                                                                              &\le \exp\left[-2(n-(t-1))\left(p^{t-1}-\frac{m}{n-(t-1)}\right)^2\right]      \\
                                                                              &= \exp\left[-2(n-(t-1))\left(\frac{m}{n} p^{\tau-1}-\frac{m}{n-(t-1)}\right)^2\right]      \\
                                                                              &= \exp\left[-2m^2(n-(t-1))\left(\frac{1}{n} p^{\tau-1}-\frac{1}{n-(t-1)}\right)^2\right]      \\
                                                                              &\le \exp\left[-2m^2(n-(t-1))\frac{1}{4n^2}\right]      \\
                                                                              &\le \exp\left[-\frac{1}{3} \frac{m^2}{n}\right].      \\
\end{align*}
The last inequality follows from the fact that $\frac{n-(t-1)}{n} \ge \frac{2}{3}$.

Finally for the case $p>\frac{1}{4}$ take $c=\frac{1}{3}(p^{-\frac{1}{2}}-1)^2$.

Choose $\varepsilon=p^{-\frac{1}{2}}-1$ so that
\[          p^{d-1} \ge \frac{1}{\sqrt{p}} = 1 + \varepsilon.        \]
From (\ref{lemma_two_inequality}) there exists an $N$ such that for $n>N$ we have 
\[          \frac{n}{n-(t-1)} \le 1+\frac{\varepsilon}{2}.        \]
Hence for $n>N$ we have
\begin{align*}
p^{t-1} - \frac{m}{n-(t-1)} &= \frac{m}{n} p^{d-1} - \frac{m}{n-(t-1)} \\
                                         &= \frac{m}{n} \left(  p^{d-1} - \frac{n}{n-(t-1)}  \right) \\
                                         &\ge \frac{m}{n}  \cdot  \frac{\varepsilon}{2}.
\end{align*}

Using this and $n-(t-1)\ge \frac{2}{3}n$ an argument analogus to the $p\leq\frac{1}{4}$ case yields (for $n>N$):
\[  \PP(B< m) \le \exp\left[-\frac{\varepsilon^2}{3}\left(\frac{m^2}{n}\right)\right].   \]

\end{proof}

For the example in figure 3 with $n=100$ and $p=.31$ there is $t=2$, $\tau=.32$, $c=.39$ for lemma 1 and $c=.21$ for lemma 2.  Thus lemma 1 implies that the chance of each triple of vertices of the graph to not be a triangle in the complex is at least $.53$ so the chance that there are no triangles is at least $0$ as these events are not independent while lemma 2 implies that the chance of each vertex of the graph to be a vertex of the complex is at least $.66$ so the chance that they all are is at least $10^{-18}$ which is nonzero only because these events are independent.   Thus these parameters are not in the regime addressed in the following theorem with $t=3$ in which the complexes are gauranteed to have high probability of having every vertex of the graph as a vertex and no triangles.  

The first two parts of the following theorem use these two lemmas while the third part follows from lemmas 4 and 5 in the next section.  

\begin{thm} If $p_n\in(0,1)$ and $m_n\in \NN$ are sequences for which 
any of the following three pairs of conditions holds:
\begin{itemize}
\item $p_n$ is constant and $\lim_{n\to\infty} \frac{m_n^2}{n(\ln n)^2}=\infty$,
\item $\lim_{n\to\infty}p_n=0$ and $\lim_{n\to\infty}\frac{-(\ln p) m_n^2}{n(\ln n)^2}>4$ or
\item $m_n=m$ is constant and there constants $t<\sqrt{2m+1}$ an integer and $b\in (t-1,\frac{m(t+1)}{m+t+1})$ with $p_n=n^{\frac{-1}{b}}$.  
\end{itemize}
then
  \[      \lim_{n\to\infty} \PP_{K\in\Gamma}(K\in Y) = 1               \]
where $\Gamma=\Gamma_{n,m_n,p_n}$ and $Y=Y_{n,t_n-1}$ with $t_n=\left[\!\left[\log_{p_n}\left(\frac{m_n}{n}\right)\right]\!\right]$.  
\end{thm}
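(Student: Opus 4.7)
The plan is to recast the event $K \in Y_{n,t-1}$ as the conjunction of two face-slot events: every $(t-1)$-subset $f \in \binom{[n]}{t-1}$ should lie in $K$, and no $(t+1)$-subset $f \in \binom{[n]}{t+1}$ should lie in $K$. I would then apply a union bound using Lemma 1 and Lemma 2 to bound the failure probability by
\[
\binom{n}{t-1}\exp[-c_2 m^2/n]+\binom{n}{t+1}\exp[-c_1 m^2/n]\le 2 n^{t+1}\exp[-c\, m^2/n],
\]
where $c=\min(c_1,c_2)$. Using $\binom{n}{t+1}\le n^{t+1}$, the problem reduces to showing that $(t+1)\ln n - c\, m^2/n \to -\infty$ under each of the first two hypotheses.

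Case 1 should be essentially immediate. With $p_n=p$ constant, the constants $c_1,c_2$ from the lemmas are fixed, and from the definition of $t$ we have $t+1\le \ln(n/m)/\ln(p^{-1})+3/2 = O(\ln n)$, so $(t+1)\ln n = O((\ln n)^2)$; the hypothesis $m_n^2/(n(\ln n)^2)\to\infty$ then forces the difference to $-\infty$. Case 2 takes a little more bookkeeping. Eventually $p_n\le \tfrac14$, so the explicit constants $c_1=\tfrac12$, $c_2=\tfrac13$ apply and we may take $c=\tfrac13$. The bound $t+1\le \ln n/\ln(p_n^{-1})+3/2$ gives $(t+1)\ln n \le (\ln n)^2/\ln(p_n^{-1})+O(\ln n)$, while the hypothesis $\liminf -\ln(p_n)\, m_n^2/(n(\ln n)^2)>4$ forces $c\, m_n^2/n > \tfrac{4}{3}(\ln n)^2/\ln(p_n^{-1})$ for all large $n$. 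Since $\tfrac43>1$, this dominates both the $(t+1)\ln n$ term and the $\ln 2$ and lower-order corrections, and again drives the difference to $-\infty$.

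Case 3 falls outside this simple union-bound regime: with $m$ fixed, the per-face probability in Lemma 2 does not decay fast enough to survive the union bound over $\binom{n}{t-1}$ lower faces, so the argument must be replaced by a second-moment computation on the random variables counting present and missing faces. I would defer this entirely to the invocation of Lemmas 4 and 5 from the next section, which are presumably tailored to the range $b\in(t-1,\, m(t+1)/(m+t+1))$ and $t<\sqrt{2m+1}$ that appears in the hypothesis (the upper bound on $b$ being precisely what makes the relevant second moment behave). The main obstacle is therefore this third case; the first two cases reduce to calibrating constants so that the constant $4$ in the hypothesis absorbs both the factor $1/c=3$ from Lemma 2 and the $(t+1)\ln n$ overhead from the union bound.
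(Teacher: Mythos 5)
Your proposal follows the paper's proof essentially verbatim: the first two cases are handled by exactly the same first-moment/union-bound argument over missing $(t-1)$-subsets and present $(t+1)$-subsets via Lemmas 1 and 2, with the same device of bounding $t_n$ by $\log_{p_n}(m_n/n)+\tfrac12$ so the entropy term becomes $(\ln n)^2/\ln(p_n^{-1})$, and the third case is delegated to Lemmas 4 and 5 just as the paper does. The only differences are cosmetic (you carry the explicit constants $c_1=\tfrac12$, $c_2=\tfrac13$ where the paper packages the comparison into $\kappa_n=\frac{m_n^2(-\ln p_n)}{n(\ln n)^2}$), so the approach and level of rigor coincide.
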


\begin{defin}
  Write {\em $\Gamma_m$ has property $P$ asymptotically almost surely (aas)} if $ \lim_{n\to\infty} \PP_{K\in\Gamma}(K\hbox{ has property }P) = 1$. \end{defin}

Thus the conclusion of the theorem is that $\Gamma\in Y$ aas.  

Figures \ref{fig:First_Conditions} and \ref{fig:Second_Conditions} ilustrate the first two sets of coditions, with the value of $t=2$, the size $n$ of the Erd\H os--Renyi graph given on the horizontal axis and the proporiton of simplices in the $m$-neighbor complex to the maximal possible displayed on the vertical axis. 

\begin{figure}[h!]
\centering
\includegraphics[width=0.6\linewidth]{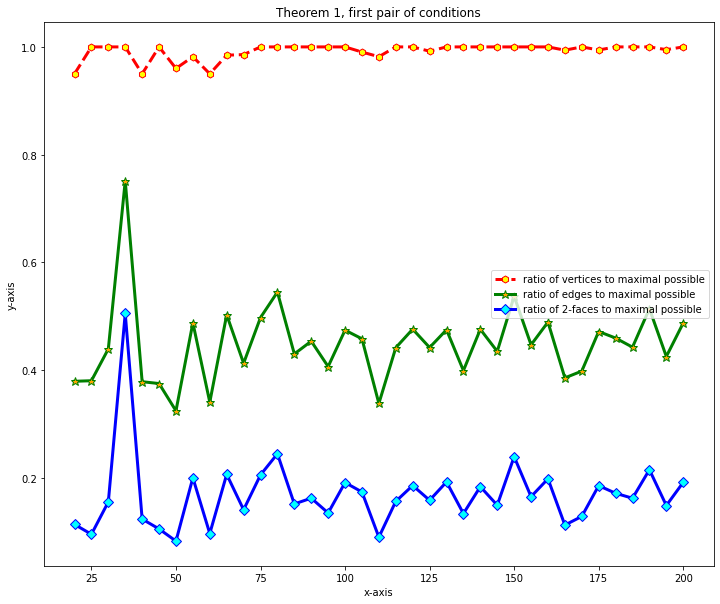}
\caption{First set of conditions. Here $p=0.5$, $m=\textrm{Ceiling\,}(n/4)$ and $t=2$. Here $q\approx 0.49$.}
\label{fig:First_Conditions}
\end{figure}

\begin{figure}[h!]
\centering
\includegraphics[width=0.6\linewidth]{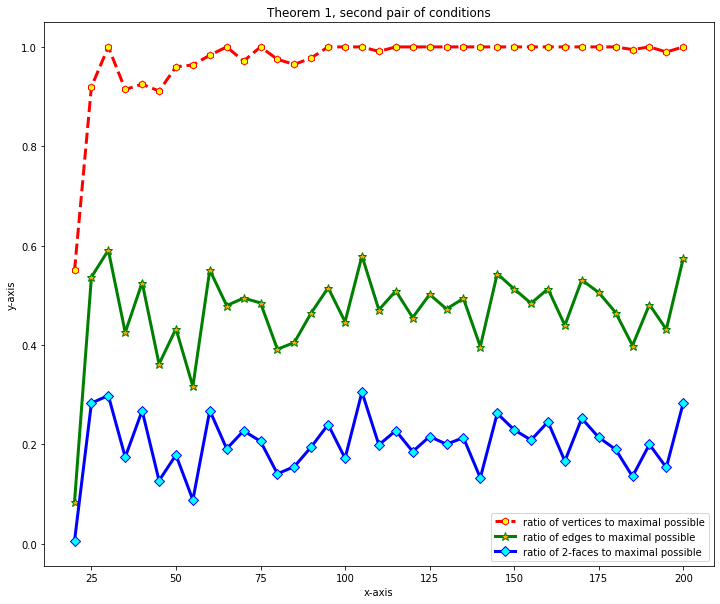}
\caption{Second set of conditions. Here $p=1/ln(ln(n))$, $m=\textrm{round}(n p^2)$ and $t=2$. Here $q\approx 0.48$.}
\label{fig:Second_Conditions}
\end{figure}

\begin{proof} This is a proof of the first two parts.  The third follows immediately from lemmas 4 and 5 of the next section.  
\newline   First use the second lemma above and the first moment method to see that ${[n]\choose t_n-1}\subseteq \Gamma$ aas.
  
 Let $K$ be a complex drawn from $\Gamma$ as described in the statement of the theorem, and let $N$ be the random variable
counting the number of $(t_n-1)$-element subsets of $K_0=[n]$ which are not faces of $K$. By the First Moment Method (see \cite{FK}, 
Lemma 22.2), we have 
\[\PP(N>0)\le \E N.\] 
Let $\kappa_n = \frac{m_n^2(-\ln p_n)}{n(\ln n)^2}$. By the second lemma above there is a constant $c>0$ depending on $p$ with

\begin{align*}
\EE N &= {n\choose {t_n-1}} \PP_{K\in\Gamma(n,m,p)}\left(\textrm{a fixed $(t_n-1)$-tuple is not a face of $K$}\right) \\
       &\le  n^{t_n-1} \exp\left[ -c\frac{m_n^2}{n} \right] \\
       &\le  \exp\left[\left(t_n-\frac{1}{2}\right) \ln(n)\right] \exp\left[ -c\frac{m_n^2}{n} \right]    \\
        &\le  \exp\left[\log_{p_n}\left(\frac{m_n}{n}\right)\ln(n)  -c\frac{m_n^2}{n} \right]    \\
          &\le  \exp\left[\frac{(\ln n)^2}{-\ln p_n}\left(1- c\kappa_n \right)\right].    
\end{align*}

In the first case of the theorem $\lim \kappa_n = \infty$ and $c$ depends only on $p$ so the limit of the last expression is equal to zero.

In the second case of the theorem $\lim \kappa_n \geq 2$ and $c=\frac{1}{2}$ so again the limit of the last expression is equal to zero.

Next use the first lemma above and the first moment method again to show that $\Gamma$ aas has dimension at most $t$.
The argument is very similar to that above but also uses the bound 
\[     t_n+1 \le     2 \frac{\ln(n)-\ln(m_n)}{-\ln(p_n)}.     \]
\end{proof}

\section{Asymptotics of $N_m(G(n,p))$}

In this section we consider the hypotheses from the third part of theorem 1.  That is $\Gamma=\Gamma_{n,m,p}$ with $p=n^{\frac{-1}{\beta}}$ for a fixed density parameter $\beta>0$, a fixed number $m$ and a growing number $n$ of vertices.   This makes the parameter $t-\tau$ from the previous section converge to $\beta$.  
We then fix a finite simplicial complex $X$ and study the limiting probability that $X$ is isomorphic to a subcomplex of a complex $K$ chosen from $\Gamma$.   This analysis includes a proof of the last part of theorem 1 but does not give total variation convergence which we conjecture below.  

\begin{defin}
Call $\beta$ a {\em threshold} for a property of a complex in $\Gamma_m$ if a complex drawn from $\Gamma_{m,n,p}$ with $p=n^{\frac{-1}{b}}$ has the property aas as $n$ grows if $b>\beta$ and does not have it aas as $n$ grows if $b<\beta$.  This ignores the behavior at $b=\beta$.

Similarly, call $\beta$ a {\em threshold} for a property of a graph $H$ drawn from $G(n,p)$, where $p=n^{\frac{-1}{b}}$ , 
if $H$ aas has the property as $n$ grows if  $b>\beta$ and does not have it aas as $n$ grows if $b<\beta$.
\end{defin}

The second part of the above definition is consistent with Definition 1.6 in \cite{FK}, for the choice of the threshold function
$p^*(n) = n^{\frac{-1}{\beta}}$.

The first part of this section sets up notation to define the $m$-density of a complex $X$ and shows that it is a threshold for $\Gamma_m$ to contain $X$ as a subcomplex.  

\begin{figure}[h!]
    \centering
    \begin{subfigure}{.32\textwidth}
        \includegraphics[width=1.14\linewidth]{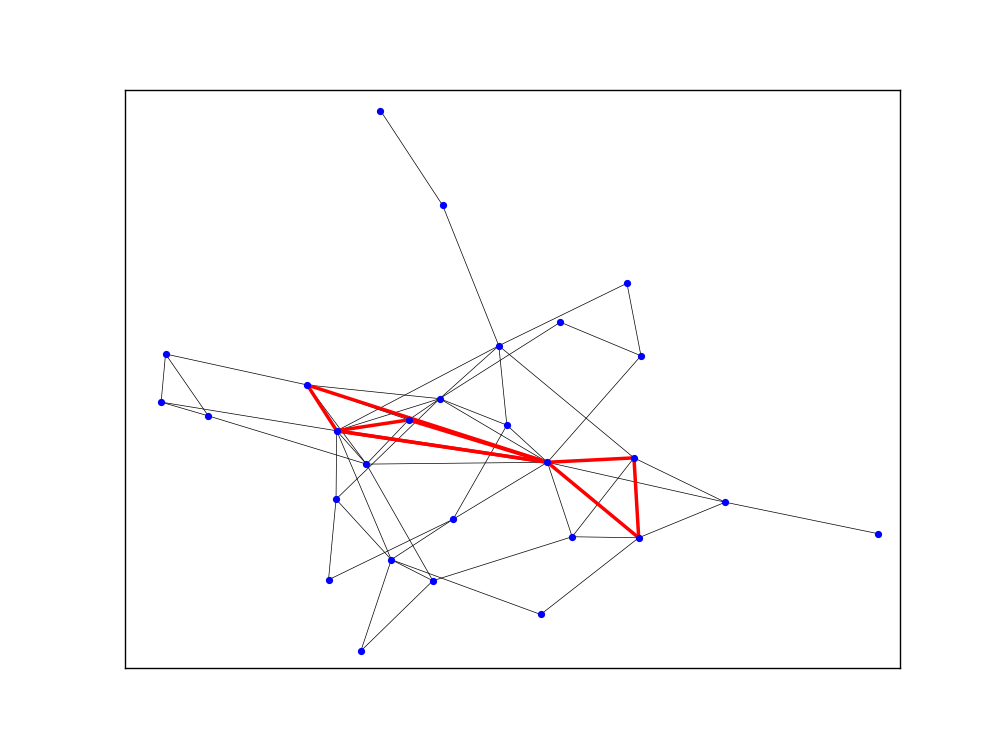}
    \end{subfigure}
    \begin{subfigure}{.32\textwidth}
        \includegraphics[width=1.14\linewidth]{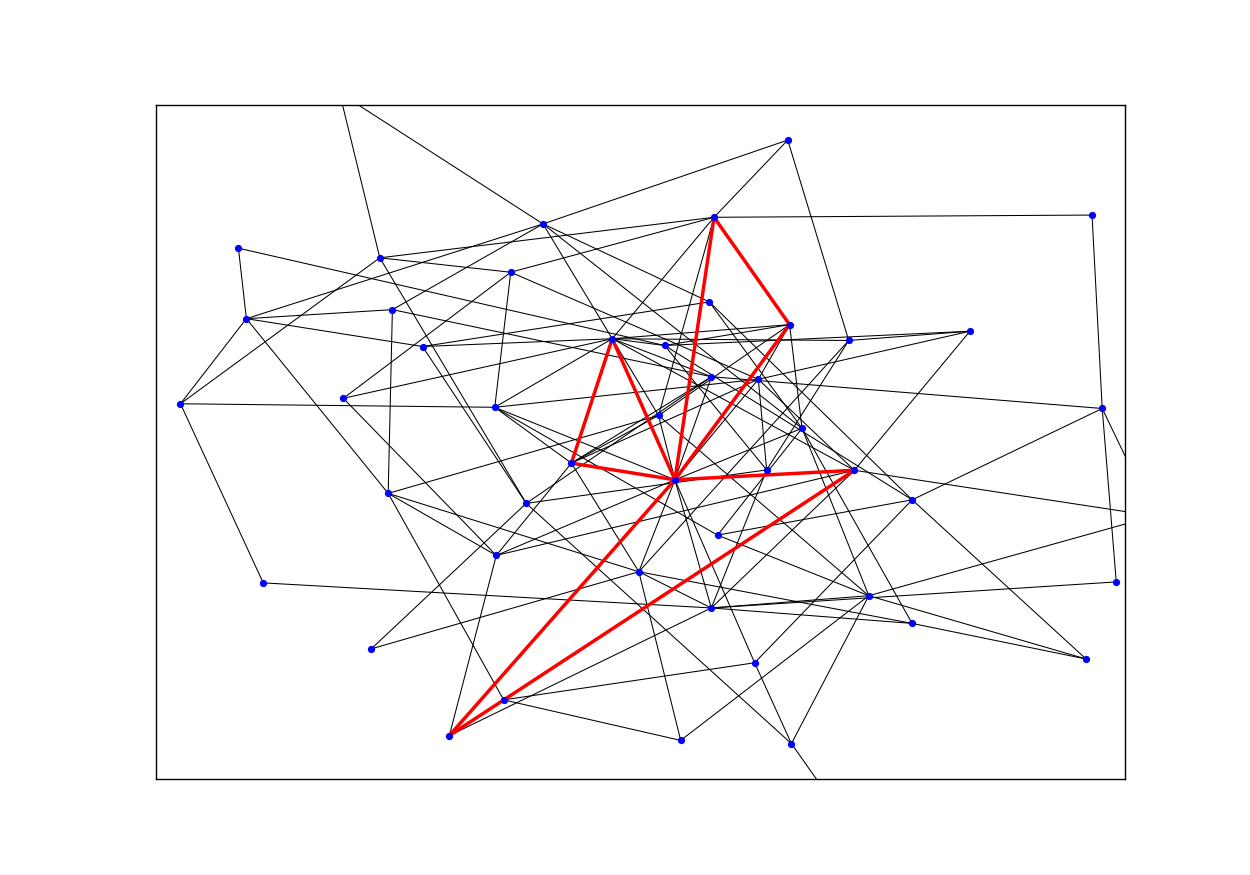}
    \end{subfigure}
    \begin{subfigure}{.32\textwidth}
        \includegraphics[width=1.14\linewidth]{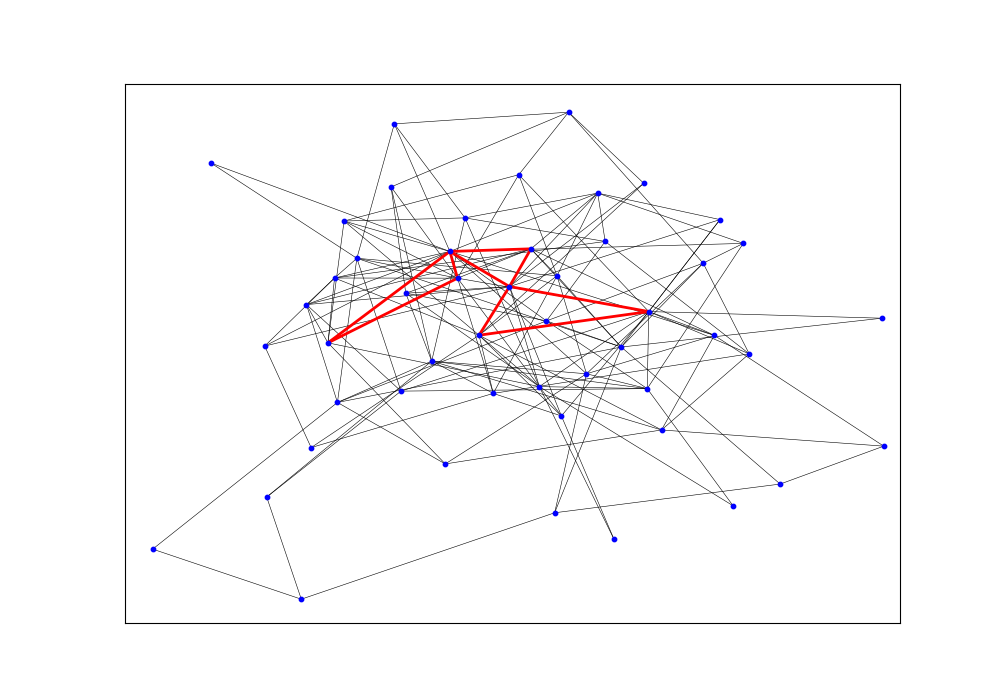}
    \end{subfigure}
    \caption{Copies of pure simplicial complexes in m-neighbor constructions on Erd\H os-- R\'enyi graphs}
    \label{fig:Correlations}
\end{figure}

The result is based on one for subgraphs of random graphs which Erd\H os and R\'enyi proved for balanced graphs (see below) and Bollob\'as stated in the form we use.  The account given in Frieze--Karo\'nski \cite{FK}
is particularly useful for our purposes.

Define the density of a nonempty graph $H$ as the ratio of the number of edges to the number of vertices:
\[             d_H=\frac{e_H}{v_H}       \]
and the related maximum subgraph density:
\[                 \bar{d}_H = \max\{ d_K: \emptyset\not=K\subseteq H     \}.       \]
A graph is balanced if $\bar{d}_G=d_G$ and strictly balanced if $d_G>d_H$ for all proper subgraphs $H$ in $G$.

{\bf Theorem 5.3 in \cite{FK}} If $H$ is a graph with $d_H>0$, then $\bar{d_H}$ is a threshold for the appearence of 
$H$ in $G(n,p)$ with $p=n^{\frac{-1}{b}}$.
\medskip


To study the probability of finding a copy of a finite complex $X$ with facets $F=\bigxf$ in a complex drawn from $\Gamma_{n,m,p}$, we will consider  functions 
\[       W: F \rightarrow \mathcal \mathcal P X_0 \]
along with the functions they induce on the power set
\[       W^{\cap},  W^{!}, W^{\cup}: \mathcal PF \rightarrow    \mathcal P X_0\   \]
which take a set of facets respectively to the intersection, exclusive intersection or union of the images of $W$.  Write $W^*_A=W^*(A)$, $w^*_A=|W^*_A|$ and by convention $W^\cap_\emptyset=X_0$.  Call each $W^*$ a version of the $F$-set $W$ and each $w^*$ a version of the $F$-shape $w$.  

{\bf Example 1.} Let $X$ be the pure $(3-1)$ dimensional simplicial complex with facets $F=\{f,g,h\}$ where
\[f=\{\alpha,\gamma,\delta\}, \quad g=\{\gamma,\delta,\theta\}, \quad\textrm{and}\quad  h=\{\delta,\kappa, \lambda\}.\] 
The complex and the geometric realization are displayed in Figure \ref{fig:Pure_Complex_1}.

\begin{figure}[h!]
    \centering
    \begin{subfigure}{.45\textwidth}
	\includegraphics[width=1.7\linewidth]{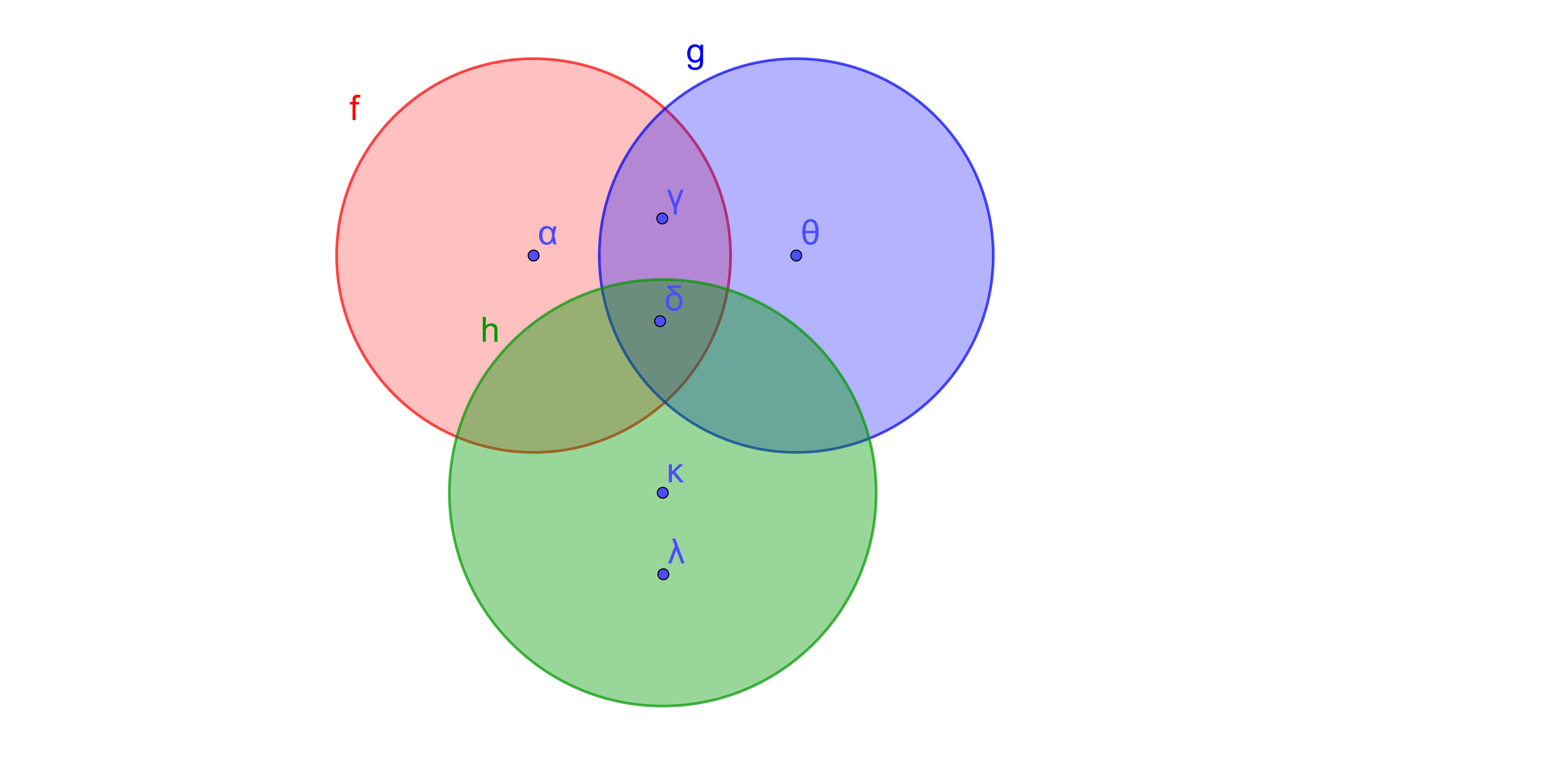}
  	\end{subfigure}
    	\begin{subfigure}{.45\textwidth}
	\includegraphics[width=1.7\linewidth]{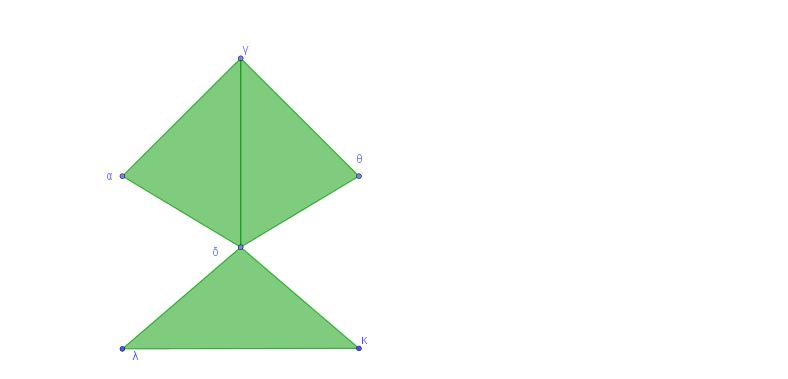}
  	\end{subfigure}
    \caption{The pure simplicial complex $X$ and its geometric realization}
    \label{fig:Pure_Complex_1}
\end{figure}
	
Hence 
\[\mathcal P F =\{ \emptyset, \  \{f\},\   \{g\},\  \{h\},\  \{f,g\},\  \{f,h\},\  \{g,h\},\  \{f,g,h\}\}\] 
and $X$ is itself an $F$-set with shape $x$ with the values of $x^{\cap }$, $x^{!}$ and  $x^{\cup}$
on the subsets $A$ of $F$ given in the following table.

\bigskip

\renewcommand{\arraystretch}{1.4}
\begin{tabular}{|c |c |c| c| c| c| c|c|c|}
\hline
$A$   & $\emptyset$     &    $ \{f\}$    &   $\{g\}$    &  $ \{h\}$    &   $\{f,g\}$   &    $\{f,h\}$     &  $ \{g,h\} $   &   $\{f,g,h\}$       \\
\hline
$x^{\cap }_{A}$   & $6$     &    $ 3$    &   $3$    &  $3$    &   $2$   &    $1$     &  $ 1 $   &   $1$       \\
\hline
$x^{!}_{A}$  & $0$     &    $1$    &   $1$    &  $2$    &   $1$   &    $0$     &  $ 0 $   &   $1$       \\
\hline
$x^{\cup }_{A}$   & $0$     &    $3$    &   $3$    &  $3$    &   $4$   &    $5$     &  $5$   &   $6$       \\
\hline
\end{tabular}

\rightline{$\square$}
\bigskip

Note that any one of the three  versions $x^{\cap}$, $x^{!}$ and $x^{\cup}$ can be explicity expressed in terms of any other 
as described in the following proposition.

\begin{prop} If $x$ is an $F$-shape and $A \subseteq F$ then:

\begin{align*}
&(a)\quad x^{\cup}_{A}=\sum_{\emptyset\not= B\subseteq A}(-1)^{|B|-1} x^{\cap}_{B}\\
&(b)\quad x^{!}_{A}=\sum_{B\supseteq A}(-1)^{|B|-|A|} x^{\cap}_{B}\\
&(c)\quad x^{\cup}_{A}=\sum_{B\cap A\not=\emptyset} x^{!}_{B} \\
&(d)\quad x^{\cap}_{A}=\sum_{B\supseteq A} x^{!}_{B}\\
&(e)\quad x^{\cap}_{A} = \left.\begin{cases}\displaystyle\sum_{ B\subseteq A}    (-1)^{|B|-1}   x^{\cup}_{B}, \qquad &A\neq \emptyset, \\
                                                                          \quad  x^{\cup}_{F} \qquad &A =\emptyset\end{cases} \right. \\
&(f)\quad x^{!}_{A}=\sum_{B\subseteq A\neq \emptyset} (-1)^{|A|-|B|+1} x^{\cup}_{F \setminus B}.
\end{align*}

\end{prop}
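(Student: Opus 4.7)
The plan is to reduce all six identities to a single combinatorial picture. For each $v\in X_0$, set $A_v=\{f\in F : v\in W(f)\}\in\mathcal{P}F$. Then $v\in W^{\cap}_A$ iff $A\subseteq A_v$, $v\in W^{\cup}_A$ iff $A\cap A_v\neq\emptyset$, and $v\in W^{!}_A$ iff $A_v=A$, where the case $A=\emptyset$ uses the convention $W^{!}_\emptyset=X_0\setminus\bigcup_f W(f)$ (which is empty in the paper's setting, where the $F$-set is $X$ itself and hence $X_0=\bigcup f$). Consequently the sets $\{W^{!}_B : B\in\mathcal{P}F\}$ partition $X_0$, and sorting an element of $W^{\cap}_A$ or of $W^{\cup}_A$ by its type $A_v$ yields (d) and (c) immediately.

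Identity (b) is then Möbius inversion on the Boolean lattice $(\mathcal{P}F,\subseteq)$ applied to (d), since the Möbius function on that lattice is $(-1)^{|C|-|B|}$. Identity (a) is the classical inclusion--exclusion formula for $|\bigcup_{f\in A}W(f)|=w^{\cup}_A$, because by definition $|\bigcap_{f\in B}W(f)|=w^{\cap}_B$.

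For (e) and (f) I would substitute (c) on the right-hand side, swap the order of summation, and show that for each type $C$ the inner alternating sum reduces to the indicator of the relation needed to reproduce (d). For (e) with $A\neq\emptyset$ this amounts to the identity
\[
\sum_{\substack{\emptyset\neq B\subseteq A\\ B\cap C\neq\emptyset}} (-1)^{|B|-1} \;=\; [\,A\subseteq C\,],
\]
which follows by separating the $B\cap C=\emptyset$ terms (those with $B\subseteq A\setminus C$) and applying the standard fact $\sum_{B\subseteq S}(-1)^{|B|}=[S=\emptyset]$; the separate $A=\emptyset$ clause is consistent because the convention gives $x^{\cap}_\emptyset=|X_0|=x^{\cup}_F$. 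A parallel calculation for (f), using $w^{\cup}_{F\setminus B}=\sum_{C\not\subseteq B}w^{!}_C$ from (c), reduces the relevant double sum to the indicator $[C=A]$ and therefore to $w^{!}_A$.

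The main obstacle is the bookkeeping for (e) and (f): one has to track the alternating sign, the incidence condition ($B\cap C\neq\emptyset$ or $C\not\subseteq B$), and the $A=\emptyset$ edge case simultaneously so that the double sums telescope to $[A\subseteq C]$ and $[C=A]$. Once that is set up cleanly, all six formulas become routine consequences of inclusion--exclusion on the Boolean lattice.
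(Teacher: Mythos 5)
Your setup is exactly the paper's (terser) argument made systematic: the paper disposes of (a) and (b) by citing inclusion--exclusion and of (c) by the observation that each vertex contributes to $x^!_A$ for exactly one $A$, which is precisely your partition of $X_0$ by the type $A_v$; your derivation of (d) from the same partition, of (b) by M\"obius inversion, and your verified identity
$\sum_{\emptyset\neq B\subseteq A,\; B\cap C\neq\emptyset}(-1)^{|B|-1}=[A\subseteq C]$
for (e) are all correct (in (e) dropping $B=\emptyset$ is harmless since $x^{\cup}_{\emptyset}=0$).

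The one place your sketch can go wrong is (f), and it is exactly the bookkeeping you flagged. The condition ``$B\subseteq A\neq\emptyset$'' must be read as ``all $B\subseteq A$, with $A\neq\emptyset$,'' \emph{including} $B=\emptyset$, whose term $(-1)^{|A|+1}x^{\cup}_{F}$ is not negligible because $x^{\cup}_{F}=x_0$. With that reading your method works: substituting $x^{\cup}_{F\setminus B}=\sum_{C\not\subseteq B}x^!_C$ and swapping sums, the inner sum is
$\sum_{B\subseteq A,\;C\not\subseteq B}(-1)^{|A|-|B|+1}=[C=A]$ for $A\neq\emptyset$,
since the unrestricted sum over $B\subseteq A$ vanishes and the excluded part $\sum_{C\subseteq B\subseteq A}(-1)^{|A|-|B|+1}$ equals $-[C=A]$. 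If instead you restrict to $\emptyset\neq B\subseteq A$ in parallel with your (e) computation, the inner sum becomes $[C=A]+(-1)^{|A|}$ for every nonempty $C$, so the right-hand side differs from $x^!_A$ by $(-1)^{|A|}x_0$; Example 1 of the paper shows the discrepancy concretely (for $A=\{f\}$ the nonempty-$B$ sum gives $-x^{\cup}_{\{g,h\}}=-5$, not $x^!_{\{f\}}=1$). So keep the empty-$B$ term and your reduction to $[C=A]$ goes through; otherwise the approach and all of (a)--(e) are sound and coincide with the paper's.
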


\begin{proof}
Formulas $(a)$ and $(b)$ follow directly from the inclusion and exclusion formula (see M. Aigner \cite{Aigner}, Chapter 5, Sieve Methods, Section 1: Inclusion-Exclusion).  
Formula $(c)$ follows from the fact that each element of $X_0$ increases $ t_A^{!}$ by $1$ for a unique $A\subseteq F$. 
\end{proof}

More generally a triple $ x=( x^{\cap}, x^!, x^{\cup})$ of integer valued functions on $\mathcal PF$ related via the above summations is called an $F${\it -shape} while a set valued one is called an $F${\it -set}.  The three entries $x^*$ are called versions of $x$.    
Write $x_0=x^{\cap}_{\emptyset}=x^{\cup}_F$.  If $x$ is the shape of a simplicial complex then $x_0$ is the number of vertices.  A final useful construction is the pointwise $\cap$-product of $F$-shapes defined by $(wx)^{\cap}_{A}=w^{\cap}_{A}x^{\cap}_{A}$.  Note that it is the $\cap$-versions which multiply pointwise while the effects on the $!$- and $\cup$-versions are more complicated.


Call an $F$-shape $x$ {\em nonnegative} if $x^!_A\geq 0$ for every $A\subseteq F$ and {\em $k$-pure} ({\em pure})  if $x^\cap_{\{f\}}=k$ for every $f\in F$.  In the latter case write $\bar{x}=k$.  Note that the shape of any simplicial complex is nonnegative and the shape is $k$-pure exactly if the complex is $(k-1)$-pure.  
Given $F$-shapes $z$ and  $x$ we say that $z\le x$ if $x-z$ is nonnegative. 

A key quantity for our considerations is a measure of density required for the 
appearence of $X$ in $\Gamma_{n,m,p}$.
\begin{defin}[$m$-density of $X$]
For a pair of pure $F$-shapes $x$ and $w$, let   $b(x,w)=\frac{(xw)_0}{x_0+w_0}$.
We define the {\em $m$-density of $X$} as
\[b_m(x) = \min_{\bar{w}=m} \left\{\max_{  z, v > 0 }\left\{ b(z,v)  \mid    z\le x,\,  v\le w   \right\} \right\} \]

Write also $b_m(X)=b_m(x)$ if $X$ has shape $x$.  
\end{defin}

These arise in the next theorem when studying whether a complex $K$ drawn from $\Gamma_{n,m,p}$ contains a copy of a given pure finite simplicial complex $X$ with shape $x$ by considering an $m$-pure $F$-set $W$ with shape $w$.  Write $H=H(G,X,W)$ for the set of all injective maps $\rho:X_0\cup W_0\rightarrow VG$ for which $\cup_{f\in F}\left[\rho(X_{\{f\}}^\cap)\times \rho(W_{\{f\}}^\cap)\right]\subseteq EG$.  Thus if $\rho\in H$ then $\rho|_{X_0}:X\rightarrow N_mG$ induces an injective map of simplicial complexes.  If $G$ is drawn from $G(n,p)$ then the log base $n$ of the expected value of  $|H|$ is positive if $\beta>b(x,w)$ for $n$ sufficiently large, as the following calculation shows. 
\begin{align*}
\lim_{n\to\infty}\log_n \EE |H| &= \lim_{n\to\infty} \log_n {n \choose {x_0+w_0}} p^{(xw)_0} \\
                                                &= x_0+w_0 -\frac{b}{\beta}(x+w)_0 = (x_0+w_0)\left(1-\frac{b}{\beta}\right).                       
\end{align*} 

\medskip 

{\bf Example 2.} Consider the pure (3-1)-dimensional simplicial complex $X$ of shape $x$ with facets $F=\{f,g,h\}$ where: 

\[       f=\{\alpha,\gamma,\delta \},   g=\{ \gamma,\delta, \theta  \}, h=\{\theta, \kappa, \lambda\}                   \]
so $x_0=6$, $\bar{x}=3$, $x_F=3$ and $\phi_x=\frac{3}{2}$.  

The complex and the geometric realization are displayed in Figure \ref{fig:Pure_Complex_2}.

\begin{figure}[h!]
    \centering
    \begin{subfigure}{.45\textwidth}
	\includegraphics[width=1.4\linewidth]{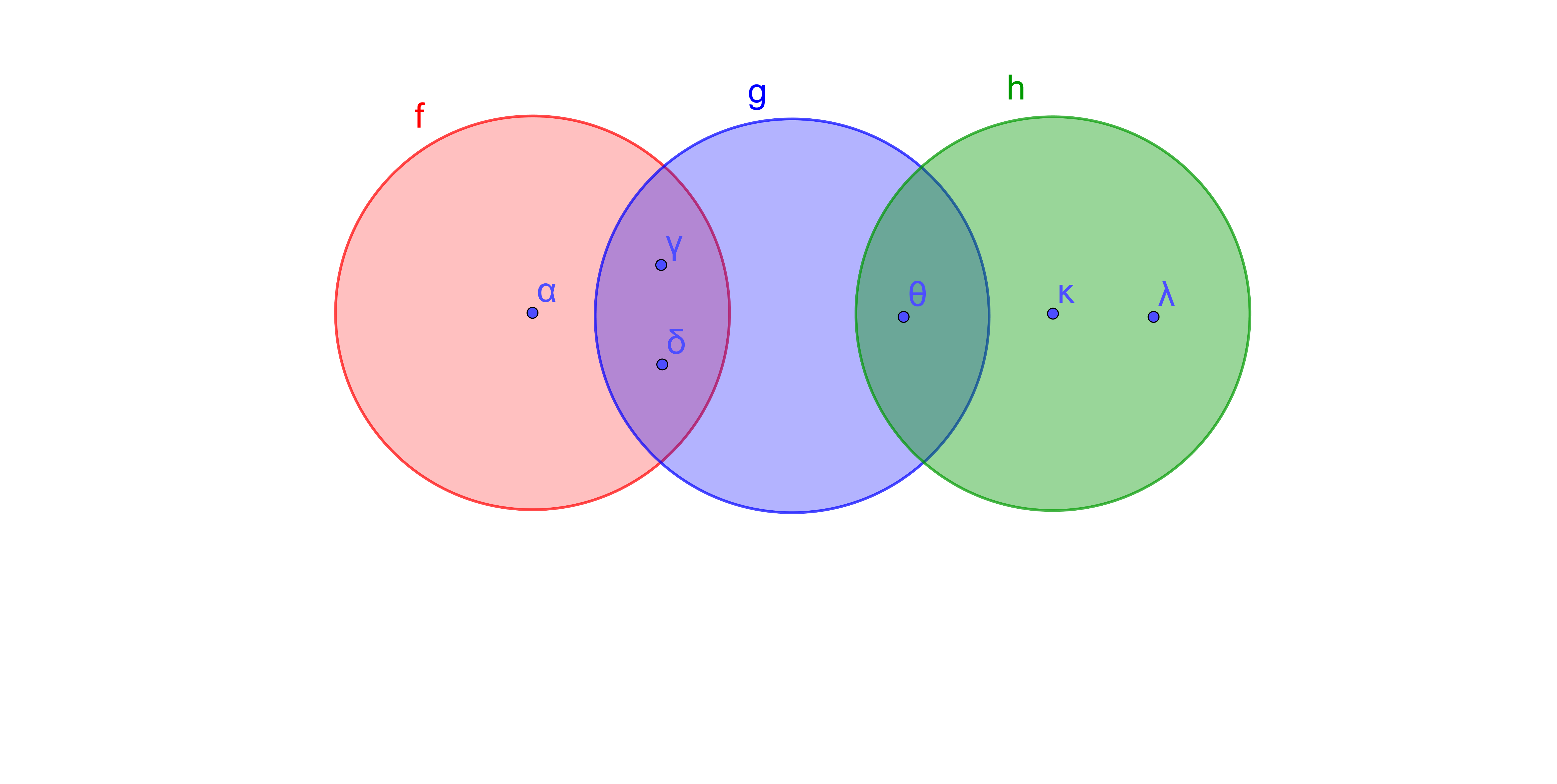}
  	\end{subfigure}
    	\begin{subfigure}{.45\textwidth}
	\includegraphics[width=1.4\linewidth]{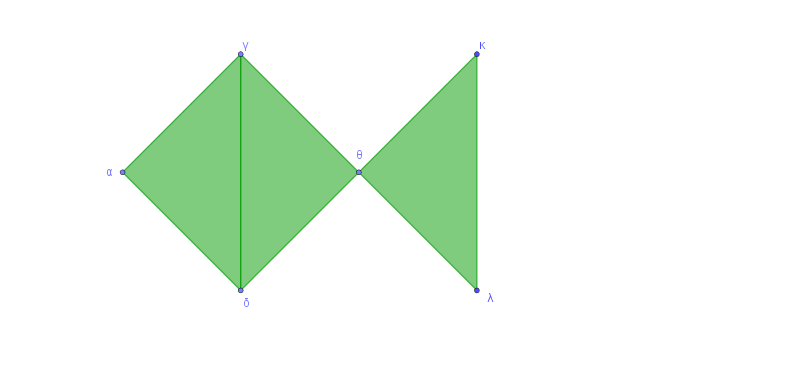}
  	\end{subfigure}
    \caption{The pure simplicial complex $X$ and its geometric realization}
    \label{fig:Pure_Complex_2}
\end{figure}
	
If a copy of the complex $X$ appears in a complex $K=N_2G$ drawn from $\Gamma_{n,2,p}$ via $\rho:X_0\rightarrow VG=K_0$ then 
\begin{itemize}
\item $X$ and $\rho X$ are $F$-sets with the same shape and
\item for each facet $f\in F$ the associated vertices $\rho X^{\cup}_{\{f\}}\subseteq VG$ have at least two common neighbors in $G$.  
\end{itemize}
Choose any such pair to be $W^{\cup}_{\{f\}}$  and call the resulting $F$-set $W$ (which by construction is $2$-pure) a {\it $2$-witness} to the copy $\rho X$ of $X$. 

In the example the $\cap$ version of the shape of $X$ is the vector:

\[x^\cap = (x^\cap_{\{f\}},x^\cap_{\{g\}},x^\cap_{\{h\}},x^\cap_{\{f,g\}},x^\cap_{\{f,h\}},x^\cap_{\{g,h\}},x^\cap_{\{f,g,h\}})=(3,3,3,2,0,1,0).\]

Here are some possibilities for the shape $w$ of a $2$-witness $W$ to a copy of $X$: 

A) If $ w_0$ takes its largest possible value of $2|F|=6$ then $ w^\cap_A= w^!_A=0$ for every $A$ with $|A|\geq 2$
\[  w^{\cap}=   (2,2,2,0,0,0,0).     \]
This extreme case appears later as the shape $r$.  Each element of $W_0$ is connected to the $3$ vertices of the face of $X$ over which it lies in Figure \ref{fig:First_Witness_to_Y}.

\begin{figure}[h!]
\centering
\includegraphics[width=1\linewidth]{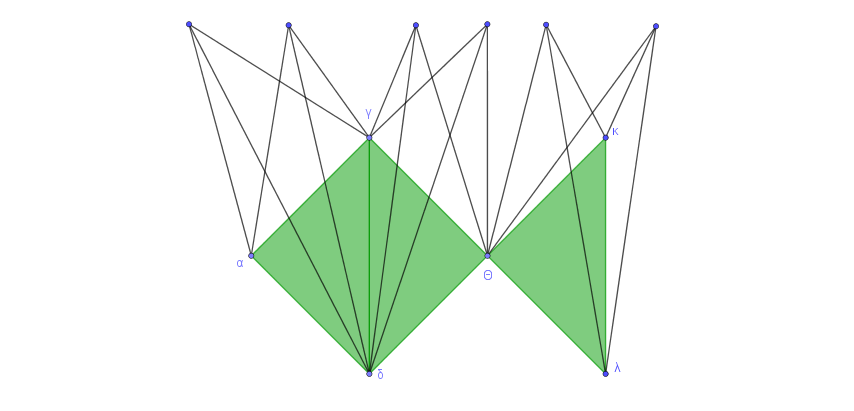}
\caption{First witness to $X$ with $(wx)_0=18$ edges}
\label{fig:First_Witness_to_Y}
\end{figure}

The density of the associated union of three complete bipartite graphs is 

\[ b(x,w)=\frac{(x w)_0}{x_0+w_0}  = \frac{18}{6+6} = \frac{3}{2}.    \]

B) One possibility with $ w_0=5$ has each element of $W_0$ connected to the vertices of the face of $X$ over which it lies in Figure \ref{fig:Second_Witness_to_Y} and 
\[  w^{\cap}=   (2,2,2,1,0,0,0).     \]

\begin{figure}[h!]
\centering
\includegraphics[width=1\linewidth]{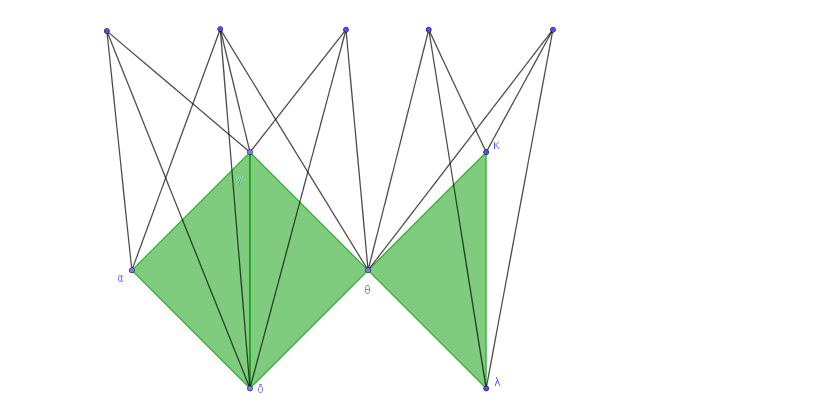}
\caption{Second witness to $X$ with $(wx)_0=16$ edges}
\label{fig:Second_Witness_to_Y}
\end{figure}

The density of this associated union of three complete bipartite graphs is 

\[ b(x,w)=\frac{(x w)_0}{x_0+w_0}  = \frac{16}{6+5} = \frac{16}{11}.     \]

C) If $ w_0=2$ which is the smallest possible value then each element of $W_0$ is connected to every vertex of $X$ as in Figure \ref{fig:Third_Witness_to_Y} and 
\[  w^\cap=   (2,2,2,2,2,2,2).     \]

\begin{figure}[h!]
\centering
\includegraphics[width=1\linewidth]{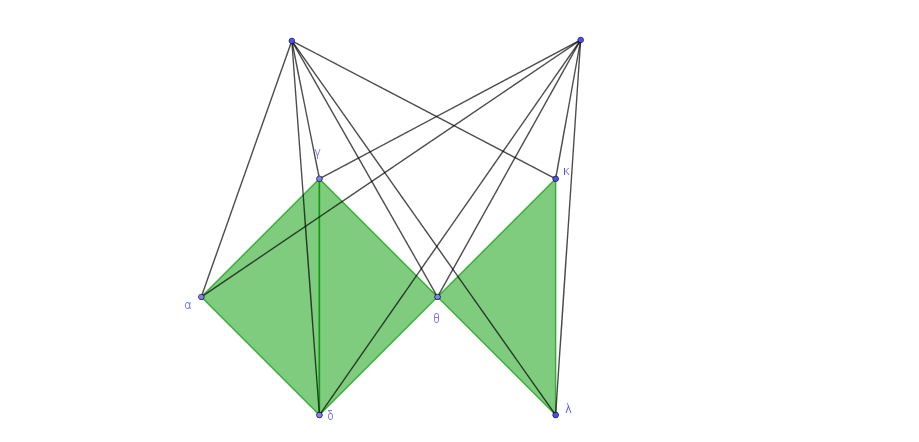}
\caption{Third witness to $X$ with $(wx)_0=12$ edges}
\label{fig:Third_Witness_to_Y}
\end{figure}

The density of this associated complete bipartite graph is 

\[ b(x,w)=\frac{(x w)_0}{x_0+w_0}  = \frac{12}{6+2} = \frac{3}{2}.     \]

\bigskip

\rightline{$\square$}
\bigskip

\begin{thm}  If $X$ is a finite simplicial complex and $m\geq 1$ then the $m$-density of $X$ is a threshold for the appearance of $X$ in $\Gamma_m$.  
\end{thm}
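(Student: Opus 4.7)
The plan is to follow the standard two-step scheme for subgraph thresholds (as in Bollob\'as, Theorem 5.3 of \cite{FK}), adapted to the bipartite facet--witness structure encoded by the pair of $F$-shapes $(x,w)$.  For each pure $F$-shape $w$ with $\bar{w}=m$, let $N_w$ denote the number of injective maps $\rho:X_0\cup W_0\to VG$ for which $\bigcup_{f\in F}\rho(X^\cap_{\{f\}})\times\rho(W^\cap_{\{f\}})\subseteq EG$, where $G=G(n,p_n)$ and $p_n=n^{-1/\beta}$.  The calculation already in the excerpt yields
\[\log_n\EE N_w=(x_0+w_0)\bigl(1-b(x,w)/\beta\bigr)+o(1),\]
and the same identity applied to sub-pairs $(z,v)\leq(x,w)$ will drive both directions of the threshold.

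For the zero-statement, assume $\beta<b_m(x)$.  Since $w^\cap_A\leq m$ for every $A\subseteq F$ there are only finitely many candidate shapes $w$ with $\bar{w}=m$.  For each such $w$ the definition of $b_m(x)$ supplies a nonzero sub-pair $(z,v)\leq(x,w)$ with $b(z,v)\geq b_m(x)>\beta$.  Any copy of $X$ inside $N_mG$ produces, after picking $m$ common neighbors for each facet, a witness $W$ of some shape $w$ so that the bipartite structure for $(X,W)$ embeds into $G$; restricting that embedding to $(Z,V)$ embeds the sub-pair as well.  The identity above gives $\EE N_{(z,v)}\to 0$, and Markov together with a union bound over the finite list of $w$ gives $\PP(X\subseteq N_mG)\to 0$.

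For the one-statement, assume $\beta>b_m(x)$ and fix $w^\ast$ attaining the minimum, so that every nonzero sub-pair $(z,v)\leq(x,w^\ast)$ satisfies $b(z,v)\leq b_m(x)<\beta$.  Then $\EE N_{w^\ast}\to\infty$.  For the second moment, split $\EE N_{w^\ast}^2$ according to the isomorphism type of the overlap of a pair of embeddings; each overlap corresponds to a sub-pair $(z,v)\leq(x,w^\ast)$ (shared vertices $z_0+v_0$, shared bipartite edges $(zv)_0$) and contributes of order
\[n^{2(x_0+w_0^\ast)-(z_0+v_0)}\,p_n^{2(xw^\ast)_0-(zv)_0}.\]
Dividing by $(\EE N_{w^\ast})^2$ leaves a factor which is $1+o(1)$ when the overlap is empty and $n^{-(z_0+v_0)(1-b(z,v)/\beta)}\to 0$ otherwise.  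Summing over the finite list of overlap shapes gives $\EE N_{w^\ast}^2/(\EE N_{w^\ast})^2\to 1$, and Chebyshev yields $\PP(N_{w^\ast}>0)\to 1$, so $X\subseteq N_mG$ aas.

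The main obstacle is the variance bookkeeping: one must enumerate the overlap patterns of two labeled $(X,W^\ast)$-embeddings in a way compatible with the $F$-shape formalism, verify that each pattern genuinely corresponds to a sub-pair $(z,v)\leq(x,w^\ast)$ whose $\cap$-product counts the shared bipartite edges correctly, and track the automorphism factors so that the displayed contribution is justified.  Once this matching is set up, the remainder of the argument parallels the classical balanced-subgraph proof, with $b(x,w^\ast)$ playing the role of the maximum subgraph density $\bar d_H$ in Theorem 5.3 of \cite{FK}.
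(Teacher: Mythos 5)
Your argument is correct in substance and follows the same reduction as the paper: a copy of $X$ in $N_mG$ is encoded by an $m$-pure witness $W$, the relevant object in $G(n,p)$ is the bipartite facet--witness graph with edge set $\bigcup_{f\in F} X^\cap_{\{f\}}\times W^\cap_{\{f\}}$, and the threshold is $\min_w\max_{(z,v)}b(z,v)=b_m(x)$. The difference is that the paper disposes of each fixed witness shape in one stroke by citing Theorem 5.3 of \cite{FK}: the maximum subgraph density of that bipartite graph is exactly $\max_{0<z\le x,\ 0<v\le w}b(z,v)$ (a density-maximizing subgraph may be taken induced, and induced subgraphs correspond precisely to sub-pairs), so both directions of the threshold come for free and only the minimization over the finitely many witness shapes remains; you instead re-prove that threshold by hand with first and second moments in the shape formalism, which is more self-contained but forces you to carry the variance bookkeeping you flag as the main obstacle. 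On that point, be aware that your stated matching is not quite right: two overlapping labeled embeddings can share a vertex that is an $X$-vertex of one copy and a $W$-vertex of the other, so the common edge set need not be of the form $(zv)_0$ for an overlap sub-pair $(z,v)\le(x,w^{\ast})$. The repair is exactly the quantity the paper's citation exploits: the shared edges span the shared vertices, hence their number is at most $\bar d_H$ times the number of shared vertices, where $\bar d_H=\max_{(z,v)}b(z,v)\le b_m(x)<\beta$, and this bound alone makes every nonempty-overlap term $o\big((\EE N_{w^\ast})^2\big)$. Finally, in your zero-statement (as in the paper's) one tacitly picks the $m$ common neighbors disjoint from the image of $X_0$; if collisions occur the resulting structure has at least as many edges on fewer vertices, so the first-moment bound only improves --- worth a sentence, but not a gap.
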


\begin{proof} Write $F=\bigxf$.  For an $m$-pure $F$-set $W$ with shape $w$, by Theorem 5.3 of \cite{FK} above,  the formula
\[\max_{ Z\subseteq X, V\subseteq W}\,  \frac{(z v)_0}{z_0+v_0}       \]
gives a threshold for $H(G,X,W)$ to be nonempty in $\Gamma_m$.  
The threshold for the appearance of $X$ then only requires selecting the $W$ for which this is minimal.  
\end{proof}

Figures \ref{fig:Asymptotics1}--\ref{fig:Asymptotics3} dispaly the average number of copies of $X$ from Examples 1 and 2 observed in 10 draws from $\Gamma_{n,m,p}$ using various values of the
relevant parameters.

\begin{figure}[h!]
\centering
\includegraphics[width=0.5\linewidth]{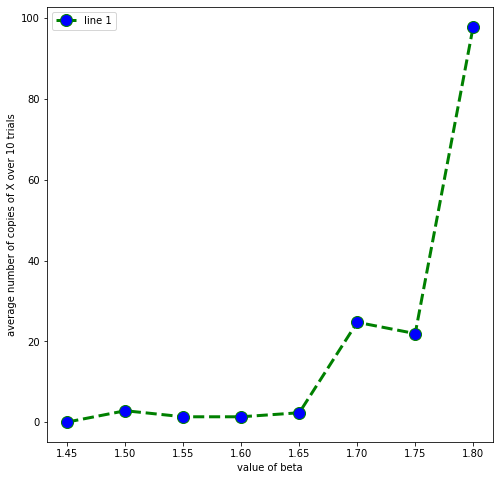}
\caption{Average number of copies of $X$  from Example 1 in $\Gamma_{n,m,p}$, where $n=50$,  $m=2$,   $b\approx 1.4$, $\beta$ is in \{1.45--1.8\} }
\label{fig:Asymptotics1}
\end{figure}

\begin{figure}[h!]
\centering
\includegraphics[width=0.5\linewidth]{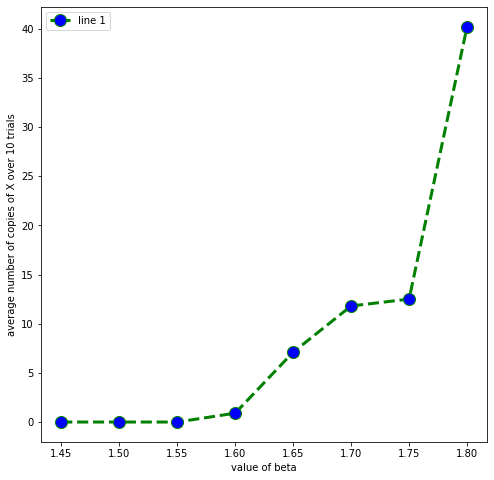}
\caption{Average number of copies of $X$  from Example 2 in $\Gamma_{n,m,p}$, where $n=50$,  $m=2$,   $b\approx 1.4$, $\beta$ is in \{1.45--1.8\} }
\label{fig:Asymptotics2}
\end{figure}

\begin{figure}[h!]
 \centering
 \includegraphics[width=0.5\linewidth]{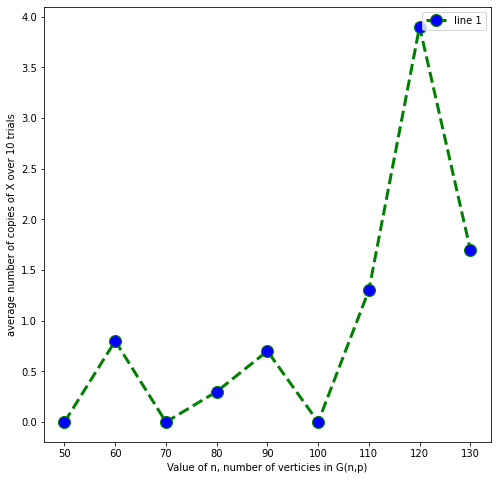}
\caption{Average number of copies of $X$  from Example 1 in $\Gamma_{n,m,p}$, where  $m=4$,   $b\approx 2$, $\beta=2.2$, and  $n$ is in \{50--130\} }
\label{fig:Asymptotics3}
\end{figure}

Write $r$ for the $m$-pure $F$-shape with $r^{\cap}_{A}=0$ for every $A\subseteq F$ with $|A|\geq 2$ so $r_.=r^{\cap}_{\{f\}}=m$ and $r_0=m|F|$ and note that if $x$ is also a pure $F$-shape then 
\[ b(x,r)=\frac{(xr)_0}{x_0+r_0}=\frac{\bar{x}}{\frac{x_0}{m|F|}+1}\]  

\begin{lemma} If $X$ is a finite $(k-1)$-pure simplicial complex with $\phi$ facets and $m>kx_0\phi$ then any $m$-pure $\bigxf$-shape $w\not=r$ has $b(x,w)>b(x,r)$.  
\end{lemma}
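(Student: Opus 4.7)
The plan is to parametrize the $m$-pure $F$-shapes $w$ (with $w^!_\emptyset = 0$, which one may assume without loss of generality since padding $W_0$ with vertices outside every facet only decreases $b(x,w)$ and so cannot attain the outer minimum in $b_m$) by the non-negative multiplicities $s_B = w^!_B$ for $\emptyset \neq B \subseteq F$. Under this parametrization the $m$-purity constraint $w^\cap_{\{f\}} = m$ becomes $\sum_{B \ni f} s_B = m$ for every facet $f$, and the extreme shape $r$ is exactly the choice $s_{\{f\}} = m$, $s_B = 0$ for $|B| \geq 2$.

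From Proposition 1 and a direct count---partitioning the witnesses in $(xw)^\cup_F$ according to the unique $B$ with $v \in W^!_B$ and observing that the $u$'s paired with such a $v$ are exactly $X^\cup_B$---I would derive the three identities
\[ w_0 = \sum_{B} s_B, \qquad \sum_B |B|\, s_B = m\phi, \qquad (xw)^\cup_F = \sum_B s_B\, x^\cup_B, \]
where $x^\cup_B = |\bigcup_{f \in B} X^\cap_{\{f\}}|$. Substituting these three formulas into the cross-multiplied target $(xw)^\cup_F(x_0 + m\phi) > km\phi(x_0 + w_0)$ and gathering coefficients of $s_B$ reduces the inequality to
\[ \sum_B D_B\, s_B > 0, \qquad D_B := m\phi(x^\cup_B - k) - x_0(k|B| - x^\cup_B). \]

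It remains to show that $D_B \geq 0$ always, with strict inequality when $|B| \geq 2$; this is where the hypothesis $m > k x_0 \phi$ enters. For $|B| = 1$ one has $x^\cup_{\{f\}} = k$ by $k$-purity of $X$, so $D_{\{f\}} = 0$. For $|B| \geq 2$, distinctness of the facets of a simplicial complex forces $x^\cup_B \geq k + 1$, while $|B| \leq \phi$ gives $k|B| - x^\cup_B \leq k\phi - k - 1$, and so
\[ D_B \geq m\phi - x_0(k\phi - k - 1) > k x_0 \phi^2 - x_0(k\phi - k - 1) = x_0\bigl(k\phi(\phi - 1) + k + 1\bigr) > 0. \]
Since $w \neq r$ forces $s_B > 0$ for some $B$ with $|B| \geq 2$, the weighted sum is strictly positive. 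I expect the principal obstacle to be setting up the parametrization correctly and carefully verifying the three identities, particularly the formula for $(xw)^\cup_F$; once these are in place, the remaining arithmetic is routine.
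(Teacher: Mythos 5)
Your proof is correct, and it takes a genuinely different (and in one respect cleaner) route than the paper's. You expand everything in the exclusive-intersection coordinates $s_B=w^!_B$ and show in one shot that the cross-multiplied difference equals $\sum_B s_B D_B$ with $D_{\{f\}}=0$ and $D_B>0$ for $|B|\geq 2$ under $m>kx_0\phi$; the three identities you flag do hold for arbitrary $F$-shapes, since by Proposition 1(d) $w^\cap_B=\sum_{C\supseteq B}w^!_C$, and substituting this into $(xw)_0=\sum_{\emptyset\neq B}(-1)^{|B|-1}x^\cap_B w^\cap_B$ and applying Proposition 1(a) gives $(xw)_0=\sum_C w^!_C x^\cup_C$ (your counting argument is the realizable-set special case of this formal computation). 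The paper instead performs a local exchange: given $w\neq r$ it picks $A$ with $|A|\geq 2$ and $w^!_A\geq 1$, subtracts the elementary shape $v$ with $v^!_A=1$ and $v^!_{\{a\}}=-1$ for $a\in A$, and shows $b(x,w)>b(x,w-v)$ by a cross-multiplication whose strict bound is essentially your inequality $D_A>0$; the descent from $w$ to $r$ is then an implicit iteration (``it suffices to check that $b(x,w)>b(x,u)$''), which your global expansion makes unnecessary --- that, together with exhibiting the gap $b(x,w)-b(x,r)$ explicitly as a nonnegative combination of the $s_B$, is what your approach buys, while the paper's move isolates a single elementary shape $v$ and keeps the computation minimal. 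Two small remarks: your ``WLOG $w^!_\emptyset=0$'' needs no padding argument (and the justification you give points the wrong way, since padding lowers $b(x,w)$ and would threaten the stated inequality rather than being harmless); the relations of Proposition 1 (items (c), (d), (e) evaluated at $A=\emptyset$ and $A=F$) already force $w^!_\emptyset=0$ for every $F$-shape. And, like the paper, you implicitly take $w$ nonnegative, which is the intended reading since the shapes arising in Theorem 3 come from actual $m$-witnesses.
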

\begin{proof} Write $F=\bigxf$ and without loss of generality, assume that $\phi\geq 2$.  Since $w\not=r$ and $r^!_A=0$ for every $|A|\geq 2$ there is some $A\subseteq F$ with $|A|\geq 2$ and $w^!_A\geq 1$.  Fix such a set $A$ and write $v$ for the $F$-shape with $v^!_A=1$, every $a\in A$ has $v^!_{\{a\}}=-1$ and otherwise $v^!_{B}=0$. Hence if $X$ is the complex from Example 1,
$v$ is the $F$-shape described by Figure  \ref{fig:V_Venn_diagram}.

\begin{figure}[h!]
\centering
\includegraphics[width=0.8\linewidth]{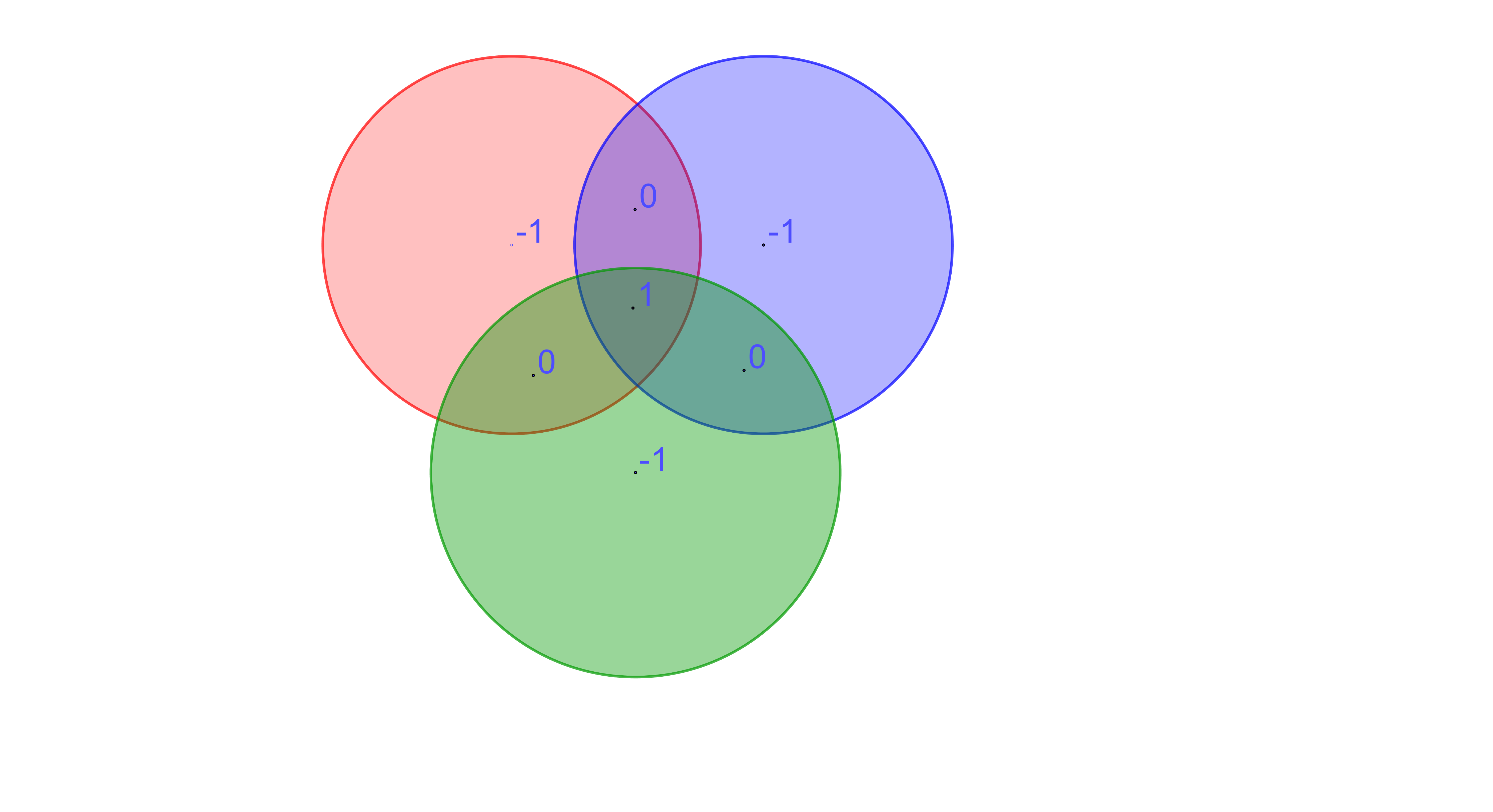}
\caption{The $F$-shape $v$}
\label{fig:V_Venn_diagram}
\end{figure}

Hence if  $u=w-v$ is another $m$-pure $F$-shape it suffices to check that $b(x,w)>b(x,u)$.  
Note that $v^\cap_B=1$ if $B\subseteq A$ and $|B|\geq 2$ and $v^\cap_B=0$ otherwise and that $v_0=1-|A|$ while $(xv)_0=x^\cup_A-|A|k$.  
The last equality follows from the fact that $v^{\cap} = (0,\dots,0,1,1,\dots 1)$ with zeros in the first $|A|$ slots and ones elsewhere. 

  Compute: 
  
 \begin{align*}
 [b(x,w)-b(x,u)]&[(x_0+w_0)(x_0+u_0)]\\
 &=(xw)_0(x_0+u_0)-(xu)_0(x_0+w_0)\\
 &=(xv)_0(x_0+w_0)-(xw)_0v_0\\
 &=(x^\cup_A-|A|k)(x_0+w_0)+(xw)_0(|A|-1)\\
 &=(x^\cup_A-k)w_0 + (|A|-1)[(xw)_0-kw_0] - (|A|k-x^\cup_A)x_0\\
 &> m +0 - \phi k x_0\\ 
 &\geq 0.
\end{align*}

For the strict inequality $x^\cup_A$ is the number of vertices in a union of at least two $(k-1)$-faces and hence at least $k+1$.  Since $w$ is an $m$-pure $w_0$ is at least $m$ so the first term is also.  The second term is positive since $(xw)_0$ is the number of edges connecting a witness of shape $w$ to a copy of $X$ and $w_0$ is the number of vertices in the witness each of which is contained in at least $k$ edges and since $w\not=r$ some vertex is contained in more than $k$ edges.  For the third term $x^\cup_A$ is nonnegative so discard it and $|A|$ is at most $\phi$.  
  
  \end{proof}

\begin{lemma} 
The property of having every size $k$ subset of the vertices as a face has $k$ as a threshold in $\Gamma_m$.
  \end{lemma}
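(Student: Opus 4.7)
My plan is to reduce both directions of the threshold to binomial tail estimates for a single $k$-subset. Exactly as in the opening lines of the proof of Lemma 1, for any $f\in\binom{[n]}{k}$, $f$ is a face of $K=N_mG(n,p)$ iff the common-neighbor count $\beta_fG\sim\bin{n-k}{p^k}$ is at least $m$; under $p=n^{-1/b}$ the associated mean is $\mu=(n-k)n^{-k/b}\sim n^{1-k/b}$, so the sign of the exponent $1-k/b$ will drive the entire dichotomy. Throughout, write $B$ for a variable with distribution $\bin{n-k}{p^k}$.

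For the direction $b<k$, where $\mu\to 0$, I would fix any single $k$-subset $f_0$ and apply Markov's inequality:
\[\PP_{K\in\Gamma}(\text{every $k$-subset is a face})\le\PP_{K\in\Gamma}(f_0\in K)=\PP(B\ge m)\le\frac{(n-k)n^{-k/b}}{m}\longrightarrow 0,\]
so aas at least one $k$-subset fails to be a face of $K$.

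For the direction $b>k$ I would instead apply a first-moment union bound
\[\PP_{K\in\Gamma}\!\left(\exists f\in\tbinom{[n]}{k}\colon f\notin K\right)\le\tbinom{n}{k}\PP(B<m)\]
and argue the right side tends to $0$. Since $m$ is fixed while $\mu\to\infty$, each of the $m$ terms $\PP(B=j)=\binom{n-k}{j}p^{kj}(1-p^k)^{n-k-j}$ for $0\le j<m$ is bounded by a polynomial in $n$ times $\exp(-(n-k-j)p^k)=\exp(-\Omega(n^{1-k/b}))$; since $1-k/b>0$ this super-polynomial decay absorbs the $\binom{n}{k}\le n^k$ factor from the union bound.

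The only real subtlety, and the nearest thing to an obstacle, is that Hoeffding's inequality (the tool invoked for Lemmas 1 and 2) is too weak in the regime $k<b<2k$: centered at the mean $\mu\sim n^{1-k/b}$ it yields only $\PP(B<m)\le\exp(-\Theta(n^{1-2k/b}))$, which need not tend to zero. Consequently one must substitute the direct Poisson-type estimate on $\PP(B=j)$ above, which exploits the fact that $m$ is fixed and that each term benefits from the full $(1-p^k)^{n-k-j}$ factor, in place of Hoeffding. Once this substitution is made the union bound closes routinely and the threshold is identified as $k$.
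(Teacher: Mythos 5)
Your proposal is correct, and its overall skeleton is the same as the paper's: for $b<k$ a first-moment bound for one fixed $k$-set, and for $b>k$ a union bound over all $\binom{n}{k}$ sets combined with a stretched-exponentially small lower-tail estimate for $B\sim\bin{n-k}{p^k}$. The differences are in the ingredients. In the direction $b<k$ the paper bounds the expected number of injective $m$-witnesses to a fixed simplex, obtaining roughly $n^{m(1-k/b)}$, whereas you apply Markov to $B$ itself and get $n^{1-k/b}/m$; both tend to zero, so either first-moment count works. In the direction $b>k$ the paper invokes a multiplicative Bernstein--Chernoff bound (from Klenke), $\PP[B\le(1-\delta)\mu]\le\exp(-\delta^{2}\mu/2)$ with $\mu\sim n^{1-k/b}$, preceded by a positive-correlation remark that is immediately relaxed to the same $1-\binom{n}{k}(\cdot)$ estimate your plain union bound gives; you instead bound the finitely many point masses $\PP(B=j)$, $j<m$, directly by $n^{j}\exp[-(n-k-j)p^{k}]$, which is legitimate precisely because $m$ is fixed, and the factor $\exp[-\Theta(n^{1-k/b})]$ then absorbs both $n^{k}$ and the polynomial prefactors. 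Your diagnosis that the additive Hoeffding inequality stated in the paper would only give $\exp[-\Theta(n^{1-2k/b})]$ and hence fails for $k<b\le 2k$ is accurate, and is exactly why the paper switches to the Chernoff form for this lemma; your elementary Poisson-type substitute is a perfectly adequate, arguably more self-contained, alternative.
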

 
\begin{proof}  Write $\Delta$ for the $(k-1)$-pure simplicial complex which is just a single simplex and $\Delta_0=[k]$.  Consider a complex $K$ drawn from $\Gamma_{n,m,p}$ with $p=n^{\frac{-1}{b}}$.

  Consider the number of simplex witnesses $N=|\{\rho\in Z_K\Delta|(\forall i\leq k)\rho i=i\}|$ and compute
\begin{align*}
\log_n \Bbb E(N) &\le \log_n \left[ {n \choose  m}  p^{mk}   \right] \le \log_n \left[ n^m  n^{\frac{-1}{b} mk}   \right] \\
                          &\le  \log_n \left[ n^{m  -\frac{1}{b} mk}   \right] \le m\left(1-\frac{k}{b}\right).
\end{align*} 
The last expression is less than zero for $b<k$, and hence the expected number itself has limit zero as $n$ goes to infinity.
We then apply the First Moment Method (see \cite{FK}, Lemma 22.2): If $X$ is a nonnegative integer
valued random variable, then 
\[  \PP(X>0) \le \EE X.   \]
We conclude that the probability that a given set of $k$ vertices is a face is aas equal to zero giving one of the threshold directions.

For the other direction take $b>k$.  For each $f\in {[n]\choose k}$ the distribution of the number of common neighbors is a binomial variable $X_f\sim B_{n-k}$ with probability $p^k=n^{\frac{-k}{b}}$ and $n-k$ samples.  If $f\cap g=\emptyset $ then $X_f$ and $X_g$ are independent and they are positively correlated otherwise.  Hence the probability that all ${n \choose k}$ such subsets have at least $m$ common neighbors is at least the ${n \choose k}$ power of $\PP(B_{n-k}\ge  m)$.  By the following argument  this is aas equal to one. 

We apply the following version of  the Bernstein--Chernoff bound (see A. Klenke \cite{AK}, Exercise 5.2.1, pg. 110):
If $X_i,\dots,  X_n$ are i.i.d. Bernoulli variables, and $S_n=X_1+\dots+X_n$ with $\mu =\EE S_n$, then for any $\delta$
\[     \PP[S_n\le (1-\delta)\mu ] \le \exp \left(  - \frac{\delta^2 \mu}{2}               \right)                           \]

Note that $\mu_n=\EE(B_{n-k}) = (n-k) n^{-\frac{k}{b}} =  n^{1-\frac{k}{b}} - k n^{-\frac{k}{b}} $.
Choose $\delta_n$ so that $(1-\delta_n)\mu_n = m-1$. The probability $P_n$ that all $k$ element subsets have at least $m$ neighbors is bounded from below as follows:
\begin{align*}
P_n&\ge \left(1- \exp \left(  - \frac{\delta_n^2\  \mu_n}{2}  \right)    \right)^{n \choose k}\\
      &\ge  1-{n \choose k} \exp \left(  - \frac{\delta_n^2\  \mu_n}{2}  \right)\\
      &\ge  1- n^k \exp \left(  - \frac{\delta_n^2\  \mu_n}{2}  \right)\\
       &\ge 1-\exp \left( k\ln n-  \left[1-\frac{m}{\mu_n} +\frac{1}{\mu_n}\right]^2 \frac{\mu_n}{2}      \right)\\
\end{align*}

The last bound has limit 1 as 
\[     \lim_{n\to\infty} \frac {\mu_n}{\ln n}  = \infty      \]

\end{proof}  
 
\begin{lemma} 
The property of having some size $k$ subset of the vertices as a face has $\frac{mk}{m+k}$ as a threshold in $\Gamma_m$.
\end{lemma}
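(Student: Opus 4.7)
The plan is to invoke Theorem 2 applied to $X = \Delta$, the single $(k-1)$-simplex with vertex set $[k]$. An injective simplicial map $\Delta \hookrightarrow K$ is exactly a labeled size-$k$ face of $K$, so the property in the lemma is equivalent to $\Delta$ appearing as a subcomplex of a draw from $\Gamma_m$. By Theorem 2 its threshold equals the $m$-density $b_m(\Delta)$, reducing the whole problem to computing this single quantity.

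With $F = \{f\}$ a singleton, the shape $x$ of $\Delta$ has $x^\cap_{\{f\}} = k$ and $x^!_\emptyset = 0$. Any $m$-pure $F$-shape $w$ satisfies $w^\cap_{\{f\}} = m$ with a single free parameter $w^!_\emptyset = w_0 - m \ge 0$, and the pointwise $\cap$-product yields $(xw)^\cap_{\{f\}} = km$ while forcing $(xw)_0 = km$ because only the cell $\{f\}$ contributes on the $X$ side. For sub-shapes $z \le x$ and $v \le w$ with $z_0, v_0 > 0$, one has $b(z,v) = \frac{z_0 v_0}{z_0 + v_0}$; this expression is strictly increasing in each coordinate, and it is maximized at $z_0 = k$, $v_0 = m$ (the latter realized by taking $v^\cap_{\{f\}} = m$ and $v^!_\emptyset = 0$, which is a valid sub-shape since $w^\cap_{\{f\}} = m$). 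The maximum value is $\frac{km}{k+m}$, independent of $w$, so the outer minimization leaves $b_m(\Delta) = \frac{km}{k+m}$ as required.

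Equivalently, one can bypass the shape formalism entirely by observing that a $k$-subset of the vertices of $G$ has at least $m$ common neighbors precisely when $G$ contains a copy of $K_{k,m}$ with the $k$-side labeled. The graph $K_{k,m}$ is balanced with density $\frac{km}{k+m}$, since $\frac{ab}{a+b}$ is nondecreasing in both $a$ and $b$ so every subgraph $K_{a,b} \subseteq K_{k,m}$ has density at most $\frac{km}{k+m}$. Applying Theorem 5.3 of \cite{FK} to $K_{k,m}$ then reproduces the same threshold. No serious analytic obstacle arises here; the content of the proof is entirely the bookkeeping verification above, with all the probabilistic heavy lifting already packaged into Theorem 2 (or, equivalently, into the classical Bollob\'as--Erd\H os--R\'enyi balanced-subgraph theorem).
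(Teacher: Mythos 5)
Your proposal is correct, but it takes a genuinely different route from the paper. The paper proves this lemma by hand: for $b<\frac{mk}{m+k}$ it bounds the expected number $M$ of pairs consisting of a $k$-set together with an $m$-witness (which is precisely a count of copies of $K_{k,m}$ in $G$) and applies the first moment method, and for $b>\frac{mk}{m+k}$ it runs a second moment argument, decomposing $\mathbb{E}(M^2)$ over the intersection patterns $(a_{kk},a_{km},a_{mk},a_{mm})$ and checking that every nonzero pattern contributes a strictly smaller power of $n$ than $(\mathbb{E}M)^2$. You instead specialize Theorem 2 to $X=\Delta$, the single $(k-1)$-simplex, and compute $b_m(\Delta)=\frac{km}{k+m}$, or equivalently bypass the shape formalism by noting that some $k$-set is a face of $N_mG$ exactly when $G$ contains a copy of $K_{k,m}$, that $K_{k,m}$ is balanced of density $\frac{km}{k+m}$, and quoting Theorem 5.3 of \cite{FK}. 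This is not circular: Theorem 2 precedes this lemma and its proof rests only on the quoted subgraph-threshold theorem, and the identification of the lemma's property with the appearance of $\Delta$ (equivalently of $K_{k,m}$) is exact, since a $k$-set is a face of $N_mG$ iff it has at least $m$ common neighbors outside itself. What your route buys is brevity, since the probabilistic work (notably the second-moment analysis) is packaged into the classical balanced-subgraph theorem of which the paper's computation is essentially a reproof for $K_{k,m}$; what it costs is self-containedness (you lean on the rather terse proof of Theorem 2, or directly on \cite{FK}) and the explicit exponents and intersection-pattern bookkeeping that the paper's argument produces and reuses later, e.g.\ in the discussion of pairs of $(k-1)$-faces sharing $\ell$ vertices. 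Two small inaccuracies, neither fatal: your formula $b(z,v)=\frac{z_0v_0}{z_0+v_0}$ is valid only when $z$ and $v$ carry no vertices outside the facet (i.e.\ $z^{!}_{\emptyset}=v^{!}_{\emptyset}=0$), since in general $(zv)_0=z^{\cap}_{\{f\}}v^{\cap}_{\{f\}}\le z_0v_0$; this only lowers the ratio, so the maximum $\frac{km}{k+m}$ is unaffected. And no labeling of the $k$-side of $K_{k,m}$ is needed: an unlabeled copy already produces some $k$-set with $m$ common neighbors.
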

  
\begin{proof} First consider the case $b< \frac{mk}{m+k}$.  
If $K=N_mG$ is drawn from $\Gamma_{n,m,p}$ with $p$ as above write $M=z_GK_{m,k}$ for the number of $m$-witnesses to $(k-1)$-simplices and compute
\begin{align*}
\log_n \Bbb E(M) &= \log_n \left[ {n \choose  k}{{n-k} \choose  m}  p^{mk}   \right]\\
                           & = \log_n \left[ \frac{n!}{k!(n-k)!}\frac{(n-k)!}{(n-k-m)!m!}  p^{mk}   \right] \\
                          &\le  \log_n \left[ n^{k+m  -\frac{1}{b} mk}   \right] - log_n(k! m!)\\
                          & = -\varepsilon  - \log_n(k! m!)
\end{align*} 
where $\varepsilon = \frac{mk}{b} -  (k+m) >0$ so $M$ is aas zero and using the first moment method as above there are aas no $k$ faces in $K$.  

Next consider the case $b>\frac{mk}{m+k}$.
Here we apply the Second Moment Method (see e.g. Lemma 22.5 in \cite{FK}): If $X$ is a nonnegative integer valued random variable, then
\[  \PP(X=0)) \le \frac{\var X}{E(X^2)} = 1 - \frac{(\EE X)^2}{\EE (X^2).} 
\]
We apply this to $M$ to obtain a lower bound on $\PP(M>0)=\PP(M\ge 1)$:
\[          \frac{(\EE M)^2}{\EE (M^2)}   \le   \PP(M>0)                           \]
and then show that
\[ \lim_n\frac{(\EE M)^2}{\EE (M^2)}=1.\]
This is achieved by writing $M^2=\sum_aM_a^2$ a finite sum with the number of terms independent of $n$ and then computing that the limit as $n$ grows without bound of

\begin{equation} 
        \log_n     \frac{(\EE M)^2}{\EE (M_a^2)} = 2 \log_n   \EE M -   \log_n   \EE (M_a^2) \label{difference}   
\end{equation}
is zero for one choice of $a$ and at positive for each of the others.

Minor modifications of the argument in the first part of the proof show that
\[ \lim_n\log_n(\Bbb EM)^2=2\big(k+m-\frac{km}{b}\big). \]

\begin{figure}[h!]
\centering
\includegraphics[width=1\linewidth]{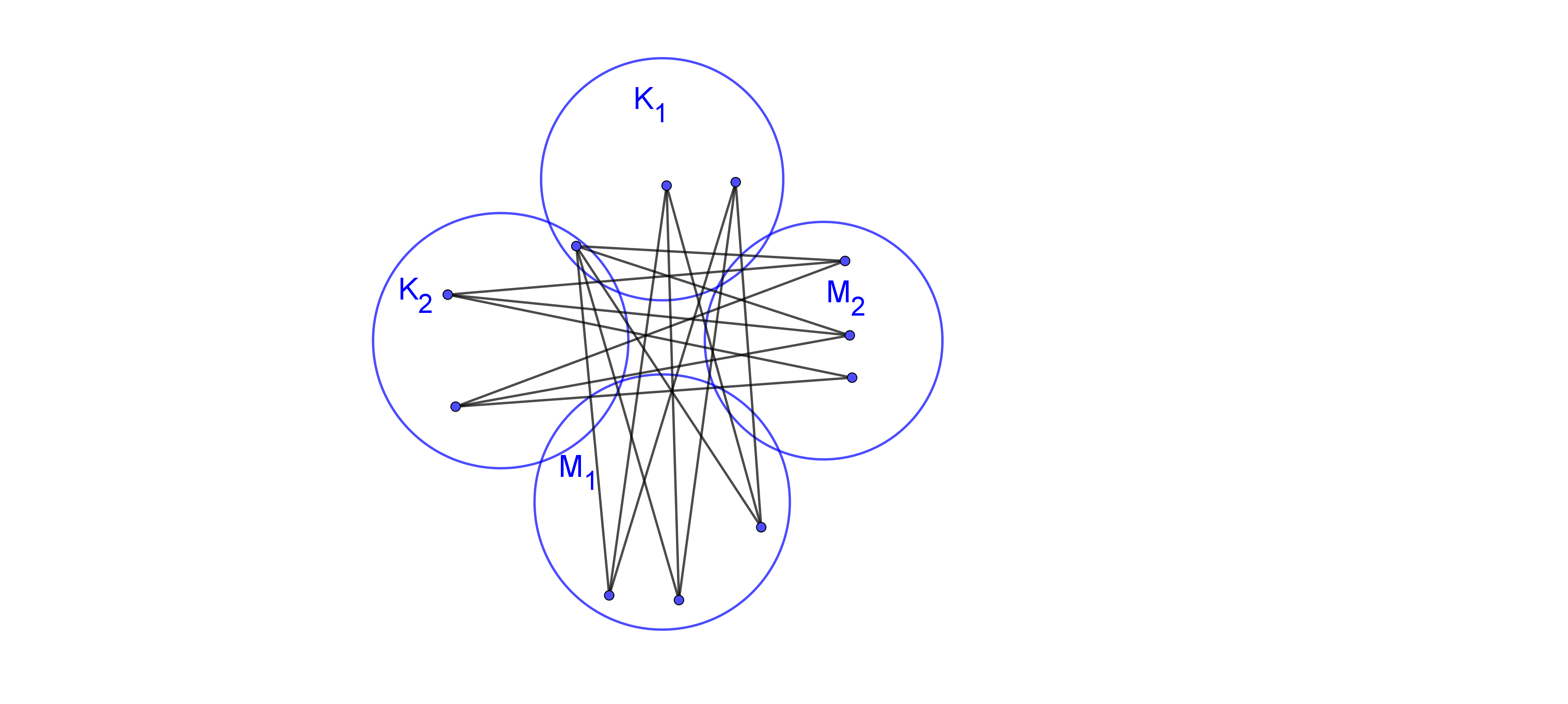}
\caption{}
\end{figure}

Bounds on the second term of (\ref{difference}) are obtained by considering the possible intersection patterns for pairs of $k$-sets $K_1$ and $K_2$ in ${[n]\choose k}$ with $m$-witnesses $M_1$ and $M_2$  in ${[n]\choose m}$ and is indexed by four nonnegative parameters 

\begin{equation*}
\begin{split}
&a_{kk}=|K_1\cap K_2|\\
&a_{km}=|K_1\cap M_2|\\
&a_{mk}=|K_2\cap M_1|\\
&a_{mm}=|M_1\cap M_2|\\
\end{split}
\hskip 1.4 cm
\begin{split}
&a_{kk} + a_{km} \le k\\
&a_{kk} + a_{mk} \le k\\
&a_{mm} + a_{km} \le m\\
&a_{mm} + a_{mk} \le m\\
\end{split}
\end{equation*}

It is then possible to compute $\Bbb E(M^2)$ as a sum over the possible $a_{..}$ values and check that the Expression (\ref{difference}) is positive.  Specifically $\Bbb EM^2$ is the sum over finitely many quadruples $a=\{a_{..}\}$ of $\Bbb EM^2_a$ and $\lim_n\log_n\Bbb EM^2_a=2(k+m-\frac{km}{b})-(a_{kk}+a_{mm}+a_{mk}+a_{km}-\frac{a_{kk}a_{mm}+a_{mk}a_{km}}{b})$.  Since the number of choices for $a$ is a function of $k$ and $m$ independent of $n$, it suffices to check that $(a_{kk}+a_{mm}+a_{mk}+a_{km}-\frac{a_{kk}a_{mm}+a_{mk}a_{km}}{b})>0$ if $a\not=0$ which is an easy check if $b>\frac{mk}{m+k}$.  Specifically, we can apply 1-variable calculus to the function $f(x)= x + y - \frac{xy}{b}$, where
 $y$ is held fixed in $[0,m]$ and $x$  ranges over $[0,k]$, to see that the function takes positive values on $(0,k)$.

It follows that the  order of magnitude (as a power of $n$) of the expectation of $M^2$ does not exceed that of  the square of the expectation of $M$.
It follows that the limit of Expression (\ref{difference}) is equal to zero and  aas $\PP(M>0)=1$.

\end{proof}

\begin{cor}
  If $k<\sqrt{2m+1}$ there is an interval of $\beta$ values for which $\Gamma_{m,n,p}$ with $p=n^{\frac{-1}{\beta}}$ is aas in $Y_{n,k-1}$ but they are not all the same, while if $k^2+k<m$ and $\max\{k,\frac{mk}{m+k}\}< \beta < \min\{k+1,\frac{mk+m}{m+k+1}\}$ then all such complexes are aas always the complex with every set of vertices of size $k$ a face and none of size $k+1$.
  \end{cor}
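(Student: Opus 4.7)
The plan is to combine Lemma 4 and Lemma 5 with appropriately chosen subset sizes. A complex $K$ lies in $Y_{n,k-1}$ iff every $(k-1)$-element subset of $[n]$ is a face and no $(k+1)$-element subset is. Applying Lemma 4 with parameter $k-1$ shows that the first condition holds aas iff $\beta>k-1$; applying Lemma 5 with parameter $k+1$ shows that the second condition holds aas iff $\beta<\frac{m(k+1)}{m+k+1}$. Hence $\Gamma_{m,n,p}\in Y_{n,k-1}$ aas for every $\beta$ in the interval $I:=(k-1,\frac{m(k+1)}{m+k+1})$.

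For the first claim, direct algebra shows $I$ is nonempty iff $(k-1)(m+k+1)<m(k+1)$, which simplifies to $k^2-1<2m$, equivalently $k<\sqrt{2m+1}$. To show the limiting complexes arising for different $\beta\in I$ are not all the same, I invoke Lemma 4 once more, this time with parameter $k$, whose threshold is $k$. For $\beta\in I$ with $\beta<k$ the limiting complex aas lacks some $k$-element face; for $\beta\in I$ with $\beta>k$ (when the interval extends that far) it aas contains every $k$-element face. These two regimes clearly give distinct limiting distributions whenever both occur inside $I$. In the remaining subcase $I\subseteq(k-1,k]$, I instead distinguish $\beta$ values through the asymptotic probability $q(\beta,n)$ that a fixed size-$k$ subset of vertices is a face, which is determined by $p=n^{-1/\beta}$ and $m$ via the binomial tail already used in the two lemmas of Section~3, and varies nontrivially with $\beta$.

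For the second claim, I first observe that $\frac{mk}{m+k}<k$ and $\frac{m(k+1)}{m+k+1}<k+1$ always hold, so the hypothesized range for $\beta$ collapses to $(k,\frac{m(k+1)}{m+k+1})$. For $\beta$ here, Lemma 4 with parameter $k$ forces every $k$-element subset to be a face aas, and Lemma 5 with parameter $k+1$ forces no $(k+1)$-element subset to be a face aas, which together pin the limiting complex down uniquely as the complete $(k-1)$-skeleton. The hypothesis $k^2+k<m$ is precisely the algebraic condition $k(m+k+1)<m(k+1)$ equivalent to $k<\frac{m(k+1)}{m+k+1}$, which ensures the interval is nonempty.

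The main obstacle is handling the ``not all the same'' clause in the subcase where $I$ sits entirely within $(k-1,k]$: the two extremes coming from Lemma 4 with parameter $k$ do not both lie in $I$, so one cannot simply separate $\beta$ values by presence versus absence of the complete $(k-1)$-skeleton. Instead one must argue at the finer level of the limiting face probability $q(\beta,n)$, which requires a modest additional analysis of the binomial tail governing it; once this dichotomy is set up the rest of the proof is bookkeeping from Lemmas 4 and 5.
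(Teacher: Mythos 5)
Your reduction to Lemmas 4 and 5 is the right frame, and two of the three pieces are correct: membership in $Y_{n,k-1}$ is exactly ``every $(k-1)$-subset is a face and no $(k+1)$-subset is,'' giving aas membership for $\beta$ in $I=(k-1,\tfrac{m(k+1)}{m+k+1})$, nonempty iff $k<\sqrt{2m+1}$; and for $k^2+k<m$ and $k<\beta<\tfrac{m(k+1)}{m+k+1}$, Lemma 4 at parameter $k$ together with Lemma 5 at parameter $k+1$ pins the complex aas to the single complex with all $k$-sets as faces and none of size $k+1$. The genuine gap is in the ``not all the same'' clause. Your primary mechanism (contrasting $\beta<k$ with $\beta>k$ inside $I$) only applies when $I$ extends past $k$, i.e.\ when $m>k^2+k$, which is strictly stronger than $k<\sqrt{2m+1}$ (e.g.\ $k=3$, $m=5$). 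In the leftover case $I\subseteq(k-1,k]$ your fallback is not a proof: for any fixed $\beta<k$ the probability that a fixed $k$-subset is a face tends to $0$, since the number of common neighbors of a $k$-set is binomial with mean $(n-k)n^{-k/\beta}\to 0$; so the claim that this quantity ``varies nontrivially with $\beta$'' in the limit is unsubstantiated and, as a distinguishing statistic, wrong as stated.

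The missing ingredient is Lemma 5 at parameter $k$, whose threshold $\tfrac{mk}{m+k}$ you never invoke in part one. Take $\beta$ with $\max\{k-1,\tfrac{mk}{m+k}\}<\beta<\min\{k,\tfrac{m(k+1)}{m+k+1}\}$; this interval is nonempty exactly when $k<\sqrt{2m+1}$, because the only binding inequality among the four comparisons is $k-1<\tfrac{m(k+1)}{m+k+1}$, i.e.\ $k^2-1<2m$ (the other three hold for all $m,k$). For such $\beta$ the complex is aas in $Y_{n,k-1}$, aas contains at least one $k$-face (Lemma 5 at $k$, since $\beta>\tfrac{mk}{m+k}$), and aas is missing some $k$-face (Lemma 4 at $k$, since $\beta<k$); hence it cannot be aas equal to any one fixed complex, which is what ``not all the same'' asserts in contrast to the second clause, where the complex is aas one specific complex. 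Note also that you read ``not all the same'' as comparing limiting behavior across different $\beta$; whichever reading one adopts, it is this middle regime $\tfrac{mk}{m+k}<\beta<k$ intersected with $I$ that closes the case $m\le k^2+k$, and your proposal as written does not supply it.
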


\begin{thm}  If $X$ is a finite $(k-1)$-pure simplicial complex with $\phi$ facets, $m>kx_0\phi$ and $k>\beta>\frac{km\phi}{x_0+m\phi}$ then with $p=n^{\frac{-1}{\beta}}$ and $q=n^{m(1-\frac{k}{\beta})}$ there is $\lim_{n\rightarrow\infty}\frac{\EE_{K\in\Gamma_{n,m,p}}(z_K X)}{\EE_{K\in Y_{n,k,q}}(z_K X)}=1$.  
\end{thm}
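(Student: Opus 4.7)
The plan is to reduce the ratio $\EE_\Gamma(z_K X)/\EE_Y(z_K X)$ to a per-embedding factorization statement. Write $F=\bigxf$ for the $\phi$ facets of $X$, and for a fixed injection $\rho:X_0\to[n]$ let $A_f$ denote the event that $\rho(f)$ has at least $m$ common neighbors in $G$. Since both complexes are simplicial, the condition $\rho(X)\subseteq K$ reduces to the statement that every $\rho(f)$ is a face of $K$. By independence of facets in the Linial--Meshulam measure, $\EE_Y(z_K X) = \tfrac{n!}{(n-x_0)!}\,q^\phi$ exactly, while $\EE_\Gamma(z_K X) = \tfrac{n!}{(n-x_0)!}\,\PP_G(\bigcap_f A_f)$; so the task reduces to showing $\PP_G(\bigcap_f A_f) = q^\phi(1+o(1))$.

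The main tool is witness enumeration. Set $N_f = \beta_{\rho(f)}G$, the number of common neighbors, and $M_\rho = \prod_{f\in F}\binom{N_f}{m}$; then $M_\rho$ equals the number of witness assignments $W:F\to\binom{[n]}{m}$ of appropriate common neighborhoods, and since $A_f\Leftrightarrow N_f\geq m$ we have $\PP_G(\bigcap_f A_f)\leq\EE M_\rho$. The expectation expands as a finite sum $\EE M_\rho = \sum_w C_w\,p^{(xw)_0}$ indexed by the $m$-pure $F$-shapes $w$ that $W$ realizes, with $C_w$ polynomial in $n$ of degree $w_0$. The distinguished shape $r$ of pairwise-disjoint witnesses (with $r_0=m\phi$ and $(xr)_0=mk\phi$) contributes $\binom{n-x_0}{m,\ldots,m,\,n-x_0-m\phi}\,p^{mk\phi}\sim n^{m\phi}p^{mk\phi}/(m!)^\phi$; since $\EE N_f = (n-k)p^k = n^{1-k/\beta}\to 0$ under the hypothesis $\beta<k$, the binomial tail gives $q = \PP(A_f)\sim\binom{n-k}{m}p^{mk}$, whence the shape-$r$ contribution equals $q^\phi(1+o(1))$.

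For every other $m$-pure shape $w\neq r$, Lemma~3 (applicable since $m>kx_0\phi$) yields $b(x,w)>b(x,r)=km\phi/(x_0+m\phi)$; because $w\neq r$ among $m$-pure shapes forces $w_0<r_0=m\phi$, the manipulation
\[
\frac{(xw)_0-(xr)_0}{w_0-r_0}<b(x,r)<\beta
\]
(the second inequality being the lower hypothesis on $\beta$) rearranges, through the negative denominator, into $w_0-(xw)_0/\beta < m\phi(1-k/\beta)$. The shape-$w$ contribution is therefore of strictly smaller polynomial order than the shape-$r$ one, and as the set of $m$-pure shapes on $\phi$ facets is finite, $\EE M_\rho = q^\phi(1+o(1))$. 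The overcount $\EE M_\rho - \PP_G(\bigcap_f A_f) = \sum_{j\geq 2}(j-1)\PP(M_\rho=j)$ is bounded by $\sum_{f_0}(m+1)\,\EE\bigl[\binom{N_{f_0}}{m+1}\prod_{f'\neq f_0}\binom{N_{f'}}{m}\bigr]$, and the same shape analysis with one extra witness for $f_0$ shows this is smaller than the shape-$r$ term by a factor $n^{1-k/\beta}/(m+1)\to 0$, hence $o(q^\phi)$; combining gives $\PP_G(\bigcap_f A_f) = q^\phi(1+o(1))$.

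The main obstacle is the translation from Lemma~3's geometric density inequality $b(x,w)>b(x,r)$ into the polynomial-exponent comparison $w_0-(xw)_0/\beta < m\phi(1-k/\beta)$ governing the witness counts; the algebraic bridge is the chain $\frac{(xw)_0-(xr)_0}{w_0-r_0}<b(x,r)<\beta$, which uses the lower hypothesis on $\beta$ directly as its second link, while the upper hypothesis $\beta<k$ enters separately to force the rare-events regime that controls the overcount. Once this bridge is set up, assembling shape-$r$ dominance, its exact match with $q^\phi$, and the overcount bound yields the required factorization and hence the convergence of the ratio to $1$.
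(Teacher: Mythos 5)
There is a genuine gap, and it sits at the one place where your argument claims more precision than it delivers: the identification ``$q=\PP(A_f)\sim\binom{n-k}{m}p^{mk}$''. With the theorem's choice $q=n^{m(1-\frac k\beta)}$ this is false by a constant factor. In the regime $\beta<k$ you correctly note $\mu_n=\EE N_f=(n-k)p^k\sim n^{1-\frac k\beta}\to 0$, but then the binomial tail gives
\[
\PP(A_f)=\PP(N_f\ge m)\sim\binom{n-k}{m}p^{km}\sim\frac{\mu_n^{\,m}}{m!}\sim\frac{1}{m!}\,n^{m(1-\frac k\beta)}=\frac{q}{m!},
\]
not $q$. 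The same $1/m!$ per facet appears in your shape-$r$ term: $\binom{n-x_0}{m,\dots,m,\,n-x_0-m\phi}p^{mk\phi}\sim\bigl(n^{m}p^{mk}/m!\bigr)^{\phi}=(q/m!)^{\phi}$, so your (otherwise sound) estimates yield $\PP\bigl(\bigcap_f A_f\bigr)=(q/m!)^{\phi}(1+o(1))$, and hence a ratio tending to $(m!)^{-\phi}$ rather than $1$. In other words, the exact-constant factorization you set out to prove is not what your computation produces; the step ``the shape-$r$ contribution equals $q^{\phi}(1+o(1))$'' fails as written. The paper never meets this issue because its proof only compares $\lim_n\log_n$ of the two expectations (both equal $x_0+m\phi(1-\frac k\beta)$), a scale on which constants such as $(m!)^{\phi}$ are invisible; to keep your sharper ``ratio $\to1$'' formulation you would have to take $q$ to be the actual face probability $\PP(B\ge m)$ (as in Section~3) or otherwise reconcile the $(m!)^{-\phi}$, i.e.\ retreat to the exponential-order statement the paper actually proves.

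Apart from this, your skeleton is essentially the paper's: the reduction by linearity to a per-embedding quantity, the enumeration over $m$-pure witness shapes with the disjoint shape $r$ dominating, and the two hypotheses entering exactly as in the paper ($\beta<k$ for the sparse regime, $\beta>b(x,r)=\frac{km\phi}{x_0+m\phi}$ together with Lemma~3 for shape-$r$ dominance). Your mediant bridge $\frac{(xw)_0-(xr)_0}{w_0-r_0}<b(x,r)<\beta$ is a correct reformulation of the paper's chain of equivalences: given $w_0<r_0$ it is equivalent to $b(x,w)>b(x,r)$, so that part checks out. The differences are organizational: the paper's lower bound counts minimal $R$-witnesses and divides by an automorphism constant, while you control the overcount $\EE M_\rho-\PP(M_\rho\ge1)$ via configurations with $m+1$ witnesses at one facet; that is a legitimate alternative, but note that the shape analysis for those ``$(m+1,m,\dots,m)$'' witness patterns, and the contribution of witness vertices lying inside $\rho(X_0)$, are asserted rather than checked (both are indeed lower order by the same exponent comparison, and the paper is no more explicit on this point).
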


Write $R$ for the extreme $m$-pure $F$-set of shape $r$ with every $r^\cap_A=0$ if $A\in {F\choose 2}$.  

\begin{proof}
Write $F$ for the facets of $X$ and let $a=1-\frac{k}{\beta}$.   Note that

\begin{align*}\lim_{n\rightarrow\infty}\log_n\EE_{K\in Y_{n,k,q}}(z_KX) &= \lim_{n\rightarrow\infty}\log_n {n \choose x_0} q^{\phi} \\
                                                                                                              & = \lim_{n\rightarrow\infty}\log_n {n \choose x_0} n^{m \phi a} = x_0+am\phi
\end{align*}

For the lower bound there are enough copies of $X$ with witnesses of shape $r$.  If $G$ is a graph write 
\begin{align*}[M(G, X)=&\{\rho\in H(G, R, X)| \\
                  &\forall (f\in F, g\in V G-\rho R^\cup_{\{f\}})\exists(h\in \rho X^\cup_{\{f\}}, (g,h)\not\in E G)\}
\end{align*} 
for the minimal $R$-witnesses to $X$ in $N_mG$ and $m(G,X)=|M(G,X)|$.  
Note that if $\rho, \rho'\in M(G,X)$ with $\rho X_0=\rho' X_0$ are witnesses to the same copy of $X$ up to symmetry and $f\in F$ then the definition of $M$ implies that $\rho R^{\cup}_{\{f\}}=\rho' R^{\cup}_{\{f\}}$.  Thus $\rho$ and $\rho'$ differ by one of at most $c=(m!k!)^\phi\phi !$ automorphisms and $m(G,X)\leq cz_{N_mG}X$ with $c$ independent of $|VG|$.  
In the  Erd\H os--R\'enyi setting $|VG|=n$ so for any of the $\frac{n!}{(n-x_0-m\phi)!}\sim n^{x_0+m\phi}$ injective maps $\rho:R_0\cup X_0\rightarrow V G$ we have 
\[\PP_{G\in G(n,p)}(\rho \in M(G,X))=p^{(rx)_0}(1-p^k)^{(n-x_0-m\phi)\phi}\]
Since $a=1-\frac{k}{\beta}<0$ by the choice of $\beta$, after some simplification this gives  
\begin{align*}\lim_{n\rightarrow\infty}\log_n\EE_{K\in\Gamma_{n,m,p}}(z_K X)&\geq \lim_{n\rightarrow\infty} x_0+m\phi-\frac{mk\phi }{\beta} -2\phi\ln^{-1}(n)n^a \\
                                                                                                                           & = x_0+am\phi.
\end{align*}
The middle inequality uses the approximation $1-c\geq e^{-2c}$ if $c<\frac{3}{4}$ with $c=p^k=n^\frac{-k}{\beta}$.  

For the upper bound note that the number of $m$-pure $F$-shapes $w$ is bounded independent of $n$ (by $(m+1)^{2^{\phi}}$ for instance), let $\Omega$ be the finite set of all such shapes. 
First, we will establish that 
\[       w_0 - \frac{(xw)_0}{\beta} < a m \phi       \]
Note that 
\[       w_0 - \frac{(xw)_0}{\beta} < \left(1-\frac{k}{\beta}\right) m \phi   \iff   \frac{km\phi}{\beta} - \frac{(xw)_0}{\beta} < m \phi  - w_0\]
Since by our assumptions on $\beta$, we have
\[  \frac{km\phi - (xw)_0}{\beta} <    \frac{km\phi - (xw)_0}{\frac{km\phi}{x_0+m\phi}}  \]
It is enough to establish that
\[   \frac{km\phi - (xw)_0}{\frac{km\phi}{x_0+m\phi}} <      m \phi  - w_0     \]
The following inequalities are equivalent
\begin{align*}
[km\phi - (xw)_0][x_0+m\phi]  &<  km\phi (m \phi  - w_0)  \\
x_0 k m \phi  -  x_0(xw)_0 - m \phi (xw)_0 &< - w_0 k m \phi  \\
(x_0 + w_0) k m \phi &<   (x_0 + m \phi) (xw)_0 \\
b(x,r)=\frac{ k m \phi}{(x_0 + m \phi)} &<    \frac{(xw)_0}{(x_0 + w_0)} = b(x,w)
\end{align*}

By  Lemma  3 if $r\not= w$,  $b(x,r)<b(x,w)$, hence the final inequality holds.

Since the expected number  $\EE_{G\in G(n,p)}(h(G,w,x))$  of copies of $X$ in $N_mG$ together with a witness of shape $w$ is approximately $n^{x_0+w_0-\frac{(xw)_0}{\beta}}$, we have

\begin{align*}
\lim_{n\rightarrow\infty}\log_n\EE_{K\in\Gamma_{n,m,p}}(z_K X) &=  \lim_{n\rightarrow\infty}\log_n  \sum_{w\in \Omega} \EE_{G\in G(n,p)}(h(G,w, X))\\
                                                                                                    &=  \lim_{n\rightarrow\infty}\log_n  \sum_{w\in \Omega}  n^{x_0+w_0 - \frac{(xw)_0}{\beta}}\\
                                                                                                     &\le  \lim_{n\rightarrow\infty}\log_n  \sum_{w\in \Omega}  n^{x_0+a m \phi}\\
                                                                                                     &=  \lim_{n\rightarrow\infty}\log_n  \left( |\Omega| \cdot   n^{x_0+a m \phi}\right)\\
                                                                                                     &= x_0+am\phi\\
\end{align*}

\end{proof}

\begin{conj}  For every $k$ here is a sequence $n_m$ for which the total variation distance between the $\Gamma_{n,m,p}$ and $Y_{n,k,q}$ distributions tends to zero as $m$ tends to infinity if $n=n_m$, $\beta=k-\frac{1}{n_m}$, $p=n^{\frac{-1}{\beta}}$ and $q=n^{\frac{-km}{\beta}}$.  
  \end{conj}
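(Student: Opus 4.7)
The plan is to couple the two measures through the underlying Erd\H os--R\'enyi graph $G\sim G(n,p)$ and compare them as joint laws of the face indicators $X_f=\mathbf 1[f\in N_m G]$ for $f\in\binom{[n]}{k}$. By Theorem~1, a complex drawn from $\Gamma_{n,m,p}$ aas lies in $Y_{n,k-1}$, so both measures may be viewed as distributions on $\{0,1\}^{\binom{[n]}{k}}$; under the Linial--Meshulam measure the $X_f$ are i.i.d.\ Bernoulli$(q)$. The first step is to calibrate the sequence $n_m$ so that the marginal $\PP(X_f=1)$ matches $q$ to leading order: since $\PP(X_f=1)\approx\binom{n-k}{m}p^{km}$ is explicitly computable in terms of $\beta=k-\frac{1}{n_m}$, this is a bookkeeping matter, and the freedom to choose $n_m$ as a function of $m$ is exactly what the conjecture uses to secure the matching of marginals.

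The decisive structural input is Lemma~3 combined with Theorem~3. Lemma~3 identifies the extremal $m$-witness shape $r$, in which the $m$ witnesses of distinct facets are pairwise disjoint, as the unique minimizer with $b(x,r)=k$ for single-simplex targets, while every other $m$-pure shape $w\not=r$ has $b(x,w)>k$. Read on pairs of $k$-simplices, this says that any graph configuration producing two $k$-faces of $N_m G$ whose minimal witnesses overlap has density strictly greater than $k$. Since the conjecture sets $\beta<k$, Theorem~3 implies that no such overlapping configurations appear aas, so conditional on this good event every realized face carries its own disjoint set of $m$ witness vertices. Because edges of $G$ between disjoint vertex pairs are independent, on this event the indicators $X_f$ factor exactly into independent Bernoulli variables, which is the structural source of the asymptotic decoupling.

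The remaining step is to convert this asymptotic decoupling into a total variation bound. I would use a Chen--Stein argument (Arratia--Goldstein--Gordon) with dependency neighborhoods defined by declaring $f\sim g$ iff the event $X_fX_g=1$ admits only shapes $w$ with shared witnesses; by the Lemma~3 argument the sum of pair covariances over such pairs is $o(1)$, which bounds the total variation distance between the scalar count $\sum_f X_f$ and the binomial target. Lifting to the full joint distribution requires either an iterated conditional argument over the faces in some ordering or a direct $\chi^2$-comparison $\chi^2(\Gamma\,\|\,Y_{n,k-1,q})$, and this lift is where I expect the main obstacle: matching marginals and pair covariances is insufficient for total variation convergence on a $\binom{n}{k}$-dimensional product space, so every order of mixed cumulant must be controlled uniformly via higher-order versions of the same threshold argument applied to larger subcomplexes $X$ of shape $w$ with many facets. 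Verifying that the sequence $n_m$ can be chosen fast enough that all such error terms are simultaneously summable to $o(1)$, while keeping the expected number of faces in a nondegenerate regime, is the substantive content of the conjecture and the delicate calibration on which the whole argument depends.
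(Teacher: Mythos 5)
The statement you are working on is Conjecture~1 of the paper: the authors do not prove it (they offer only the expectation-level evidence of Theorem~3 and the heuristic reduction following the conjectures), so your text has to stand on its own as a proof, and it does not. The decisive structural claim in your second paragraph is false as stated: even when the minimal witnesses of distinct realized faces are disjoint, the indicators $X_f=\mathbf 1[f\in N_mG]$ do not ``factor exactly into independent Bernoulli variables.'' Each $X_f$ is a function of all edges between $f$ and $[n]\setminus f$, so two $k$-sets with $f\cap g\neq\emptyset$ are positively correlated through their shared pool of potential common neighbors --- the paper itself points this out in the proof of Lemma~4 --- and this correlation is present regardless of which witnesses happen to be realized. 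Moreover, conditioning on the global ``good event'' (no overlapping witness configurations) is itself a source of dependence among all the indicators; independence of edge variables on disjoint vertex pairs does not survive such conditioning. Lemma~3 and the threshold theorem also cannot be invoked the way you use them: they are asymptotic statements for a \emph{fixed} finite complex $X$ and fixed $m$ as $n\to\infty$, with the hypothesis $m>kx_0\phi$, whereas the conjecture's regime has $m\to\infty$, $n=n_m$, and $\beta=k-\frac{1}{n_m}\to k$, so any use of them requires uniformity in $m$ and in the size of the subcomplexes considered, which you do not establish.

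The second gap is one you candidly flag yourself: a Chen--Stein bound controls the law of the scalar count $\sum_fX_f$, not the joint law of all $\binom{n}{k}$ indicators, and total variation convergence of $\Gamma_{n,m,p}$ to the product measure $Y$ requires controlling correlations of every order (equivalently, the probabilities of all joint face patterns, i.e.\ of all subcomplexes $X$ with many facets) uniformly in the number of facets, simultaneously with the calibration of $n_m$. That is precisely the content of the conjecture and of the paper's Conjecture~2 (which raises the possibility that already the expected count of some fixed $X$ fails to match); nothing in your sketch rules this out. So what you have is a reasonable programme --- marginal calibration, witness-disjointness as the mechanism for decoupling, and a cumulant/Chen--Stein comparison --- consistent with the paper's own heuristics, but the two essential steps (a correct decoupling statement and its lift to total variation on the whole cube) are missing, so the conjecture remains unproved by your argument.
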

  
\begin{conj}  There is a choice of $m$, $\beta$ and a positive $k$-pure shape $x$ for which if $p=n^{\frac{-1}{\beta}}$ then the support of $\Gamma_{n,m,p}$ is aas in $Y_{n,k}$ but if also $q=n^{\frac{-km}{\beta}}$ then $\lim_{n\rightarrow\infty}\frac{\EE_{K\in\Gamma_{n,m,p}}(z_Kx)}{\EE_{K\in Y_{n,k,q}}(z_Kx)}\not=1$.  \end{conj}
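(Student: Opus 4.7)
The plan is to verify the conjecture by choosing parameters for which Theorem 2 applies and then comparing the two expected counts directly. The crucial observation is that the Linial--Meshulam face probability $q=n^{-km/\beta}$ appearing in the conjecture differs multiplicatively from the parameter $q'=n^{m(1-k/\beta)}$ used in Theorem 2 by a factor of $n^m$, so comparing $\EE_\Gamma$ with $\EE_Y$ using this smaller $q$ produces a ratio growing as a positive power of $n$ rather than converging to $1$.

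To execute this, fix $k\ge 2$ and take a nontrivial $(k-1)$-pure simplicial complex $X$ of shape $x$; the simplest choice is a single $(k-1)$-simplex, for which $\phi=1$ and $x_0=k$. Choose $m$ with $m>\max\{kx_0\phi,\,k^2+k\}$: the first bound ensures the hypotheses of Theorem 2 are met for $X$, and the second gives both $k<\sqrt{2m+1}$ and $k<m(k+1)/(m+k+1)$, so that the third part of Theorem 1 may be applied with $t=k$. Next pick $\beta$ in the intersection of the intervals $(k-1,\,m(k+1)/(m+k+1))$ and $(km\phi/(x_0+m\phi),\,k)$; by the choice of $m$ the intersection contains the subinterval $(km\phi/(x_0+m\phi),k)$, which is nonempty.

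With these parameters, the third part of Theorem 1 gives $\Gamma_{n,m,p}\in Y_{n,k}$ asymptotically almost surely, establishing the support condition. Applying Theorem 2 with $q'=n^{m(1-k/\beta)}$ yields
\[
\EE_{K\in\Gamma_{n,m,p}}(z_K X) \,\sim\, \EE_{K\in Y_{n,k,q'}}(z_K X) \,\sim\, {n\choose x_0}(q')^{\phi} \,\sim\, C\, n^{\,x_0+m\phi(1-k/\beta)},
\]
while a direct calculation for $Y_{n,k,q}$ with the conjecture's $q=n^{-km/\beta}$ gives
\[
\EE_{K\in Y_{n,k,q}}(z_K X) \,\sim\, C'\, n^{\,x_0-km\phi/\beta}.
\]
The ratio in the conjecture therefore behaves like $n^{\,m\phi(1-k/\beta)+km\phi/\beta}=n^{m\phi}\to\infty$, which is emphatically not $1$.

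The only real obstacle is ensuring the range of $\beta$ for which both the support condition and Theorem 2 apply is nonempty; this reduces to the elementary inequalities $km\phi/(x_0+m\phi)<k$ and $k<m(k+1)/(m+k+1)$, both immediate once $m$ is taken sufficiently large relative to $k$, $x_0$ and $\phi$. There is no subtlety in the leading asymptotics: Theorem 2 already pins down $\EE_\Gamma$, and the discrepancy with $\EE_{Y_{n,k,q}}$ is accounted for entirely by the mismatch between the conjecture's $q$ and the $q'$ of Theorem 2 that matches the marginal face probability of $\Gamma$.
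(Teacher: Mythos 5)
The statement you are addressing is Conjecture 2, which the paper leaves open (the discussion that follows it is an unsuccessful search for an example), so there is no proof of it in the paper; more importantly, your argument does not establish the intended statement. Everything rests on reading $q=n^{-km/\beta}$ literally and noticing that it is smaller by a factor $n^{m}$ than the $q=n^{m(1-k/\beta)}$ of the expectation-comparison theorem (your ``Theorem 2'', the last theorem of Section 4), so that the ratio diverges like $n^{m\phi}$. But that exponent is evidently a slip in the paper: $n^{-km/\beta}=p^{km}$ is the probability that one \emph{fixed} $m$-set witnesses a fixed $k$-face, whereas the quantity the paper calibrates $Y$ against throughout is the marginal face probability (``Call this face probability $q$'' in Section 3), which carries the extra factor $\binom{n}{m}\approx n^{m}/m!$ and is $\approx n^{m(1-k/\beta)}$ in this regime. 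The reduction printed immediately after the conjecture confirms the intended reading: its third condition $b<\frac{\phi\bar x\bar w}{x_0+\phi\bar w}$ is glossed ``so $X$ does not occur in $Y_{n,k,q}$'', and $\frac{km\phi}{x_0+m\phi}$ is precisely the vanishing threshold of $n^{x_0}q^{\phi}$ when $q=n^{m(1-k/\beta)}$; with your literal $q$ that threshold would be $\frac{km\phi}{x_0}$ instead. Under the intended calibration, your parameter choice (a single $(k-1)$-simplex, $m>kx_0\phi$, $\frac{km\phi}{x_0+m\phi}<\beta<k$) lands squarely inside the hypotheses of that theorem, which asserts that the limit \emph{is} $1$: far from witnessing the conjecture, your example is one where the conjectured inequality provably fails.

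Structurally, a one-facet complex can never work under the intended reading: $q$ is (asymptotically) the marginal probability that a $k$-set is a face of $N_mG$, so for a single simplex the two expectations agree essentially by construction. The content of the conjecture is whether correlations among \emph{several} facets can push a subcomplex count off the Linial--Meshulam prediction even though supports and single-face marginals match; any candidate needs at least two facets and parameters outside the regime $m>kx_0\phi$, $\beta>\frac{km\phi}{x_0+m\phi}$ in which Lemma 3 forces the extremal witness shape $r$ to dominate. That is exactly what the inequalities (\ref{Conjecture2_inequality1})--(\ref{Conjecture2_inequality2}) encode, and Example 2 (cases A--C) records that they fail or degenerate for the natural first candidates. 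So your proposal settles only a trivialized statement created by the typo in the exponent; the conjecture as intended remains open. (A minor further point: even granting the literal $q$, asserting that the comparison theorem pins down $\EE_{\Gamma}(z_KX)$ up to a constant $C$ overstates what its proof delivers, which is control at the level of $\log_n$ of the expectations; that would not matter for a polynomially divergent ratio, but it should be stated accurately.)
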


\medskip

In an effort to find an example for the second conjecture or disprove it consider the following reduction.  If $W$ is an $m$- witness for a $(k-1)$-pure complex $X$ write 
\begin{align*}
\begin{split}
\bar w &= m, \\
\bar x &= k , \\
x_w &= \frac{x_0\bar w}{w_0\bar x},\\
\pi^w_x &= \frac{(xw)_0}{x_0\bar w}\geq 1,\\
\pi^x_w &= \frac{(xw)_0}{w_0\bar x}\geq 1,\\
\end{split}
\hskip 2 cm
\begin{split}
\phi &= x_F,\\ 
\phi_x &= \frac{\phi \bar x}{x_0}\geq 1,  \\
\phi_w &= \frac{\phi \bar w}{w_0}\geq 1,\\
 b &= \frac{(xw)_0}{x_0+w_0}=\frac{\bar x\bar w}{\bar x(\pi^w_x)^{-1}+\bar w(\pi^x_w)^{-1}}. 
\end{split}
\end{align*}
The above lemmas imply that there is a choice of $\beta$ in the conjecture if 
\begin{align*}
\bar x-1<b \qquad\textrm{ (so all $k-2$ faces occur aas), }\\
b<\frac{\bar w(\bar x+1)}{\bar w+\bar x+1} \qquad\textrm{(so no $k$ faces occur aas) and }\\ 
b<\frac{\phi \bar x\bar w}{x_0+\phi \bar w} \qquad\textrm{ (so $X$ does not occur in $Y_{n,k,q}$).} 
\end{align*}
 Substituting the definition of $b$, cross multiplying and collecting terms involving $\bar w$ in these three inequalities allows them to be rewritten as
\[\frac{x_w(\bar x-1)}{1+\bar x(\pi^x_w-1)}<\frac{\bar w}{\bar x},\]
\[\frac{(\pi^x_w-x_w)(\bar x+1)}{1-\bar x(\pi^x_w-1)}<\frac{\bar w}{\bar x},\]
\[\frac{\bar w}{\bar x}<\frac{\phi_w-\pi^x_w}{\phi_x(\pi^x_w-1)}\] respectively.
Finally the first and third and second and third yield inequalities by ignoring the intervening $\frac{\bar w}{\bar x}$ and again crossmultiplying and collecting $\bar x$ terms yields equivalent inequalities
\begin{align}
\bar x&<\frac{\phi_w-1}{\pi^x_w-1}, \label{Conjecture2_inequality1} \\
\bar x&<\frac{1}{\pi^x_w-1}(1-\phi_w\frac{\pi^w_x-1}{\phi_x-1})\label{Conjecture2_inequality2} 
\end{align}
respectively.  Perhaps this formulation is easier to work with.  

\medskip

{\bf Example 2, continued.} Returning to the complex $X$ of shape $x$ with facets $F=\{f,g,h\}$: 

\[       f=\{\alpha,\gamma,\delta \},   g=\{ \gamma,\delta, \theta  \}, h=\{\theta, \kappa, \lambda\}                   \]
we have $x_0=6$, $\bar{x}=3$, $x_F=3$ and $\phi_x=\frac{3}{2}$.  For the 2-witnesses W mentioned earlier, the above parameters take the following form.

A) We have $(wx)_0=18$, $\phi_w=1$, $\pi^x_w=1$, $\pi^w_x=\frac{3}{2}$ and 
 the inequalities (\ref{Conjecture2_inequality1}) and (\ref{Conjecture2_inequality2}) 
 that would be needed for a counterexample do not apply since $W=R$.  In both cases the left hand side equals 3, and the right hand sides are fractions with zero in both the numerator and denominator.   

B) We have $(wx)_0=16$, $\phi_w=\frac{6}{5}$, $\pi^x_w=\frac{16}{15}$, $\pi^w_x=\frac{4}{3}$ and the inequalities (\ref{Conjecture2_inequality1}) and (\ref{Conjecture2_inequality2}) that would be needed for a counterexample are almost satisfied (both turn out to be $3<3$) .   

C) We have $(wx)_0=12$, $\phi_w=3$, $\pi^x_w=2$, $\pi^w_x=1$ and 
 the inequalities (\ref{Conjecture2_inequality1}) and (\ref{Conjecture2_inequality2}) 
 that would be needed for a counterexample are $3<2$ and $3<1$ respectively.

\medskip

A simple case to study is that in which $X$ has only two facets which each have $k$ vertices and share $\ell$ of these.

In this case if $m<\frac{\ell(2k-\ell)}{2(k-\ell)}$ then $\Gamma_{n,m,n^{\frac{-1}{\beta}}}$ aas contains all $k-1$ faces if $\beta> \frac{(k+1)m}{k+1+m}$ and aas contains no pair of $k-1$ faces which intersect in $\ell$ vertices if $\beta< \frac{(k+1)m}{k+1+m}$.  If on the other hand $m>\frac{\ell(2k-\ell)}{2(k-\ell)}$ there is a third phenomenon.  If $\frac{2km}{2k+2m-\ell}<\beta<\frac{(k+1)m}{k+1+m}$ then $\Gamma$ aas contains pairs of $k-1$ faces which share $\ell$ vertices but no $k$ faces.  In this case the expected number of such pairs is approximately $n^{2k+2m-\ell-\frac{1}{\beta}2km}$.


\section{Computer Simulations}

In this section we briefly summarize the results of computer simulations which are consistent with the results presented in the earlier section. 
Using the Networkx and Gudhi Python libraries, we have implemented the Erd\H os--R\'enyi graphs and the m-neighbor construction. 
For $n=150$ vertices, the probability $p=0.2$ and the values of $m$ in the set $\{2,4,8,12\}$ we have obtained random complexes whoses number of 
simplices is presented in the following table.

\begin{tabular}{|c |c |c| c| c| c| c|}
\hline
Number of neighbors     & m=1     &  m=2     &  m=4     &   m=8    &  m=12         \\
                                    &             &               &              &             &                    \\
\hline
                                    &             &               &              &             &                      \\
Value of     $t$             &     3         &     3       &   2         &    2        &       2           \\
                                    &             &               &              &             &                       \\
\hline
                                    &             &               &              &             &                      \\
Number of Simplices     &  11160   & 11097     &     150   &    150   &         150        \\
in dimension $t-2$        &             &               &              &             &                       \\
\hline
                                    &             &               &              &             &                      \\
Number of Simplices     &  389580  &  219430 &     9515  &   2882     &     207          \\
in dimension $t-1$        &             &               &              &             &                       \\
\hline
                                    &             &               &              &             &                      \\
Number of Simplices     &      0       &        0     &    0       &       0    &         0         \\
in dimension $t$            &             &               &              &             &                       \\
\hline
Ratio of simplices in      &             &               &              &             &                      \\
dimension $t-2$ to       &  0.999    &    0.993  &    1        &       1   &           1           \\
the maximal possible    &             &               &              &             &                       \\
\hline
Ratio of simplices in      &             &               &              &             &                      \\
dimension $t-1$ to        &   0.707   &    0.398  &   0.851  &  0.257   &   0.0185           \\
the maximal possible     &             &               &              &             &                       \\
\hline
Estimate of  $q$            &             &               &              &             &                      \\
via the binomial            &    0.693 &  0.329   &    0.847    &  0.242  &       0.0162       \\
distribution                    &             &               &              &             &                       \\
\hline
\end{tabular}

\section{Proofs}

{\bf Hoeffding's Inequalities.}\ 
{\it If $B\sim\bin n q$ then:
\begin{itemize}
\item $\PP(B\leq m)\leq \exp[-\frac{2}{n}(nq-m)^2]$ if $m<nq$ and 
\item $\PP(B\geq m)\leq \exp[-\frac{2}{n}(nq-m)^2]$ if $m>nq$.  
\end{itemize}
}
\medskip

\begin{proof}
We start with the original theorem of Hoeffding:

\medskip
{\it Theorem 2 \cite{H}:} If $X_1,\dots,X_n$ are independent, $a_i\le X_i\le b_i$ for $i=1,\dots,n$, and $\mu=\EE\bar X$ with $\bar X=\frac{1}{n}\sum_{i=1}^n X_i$ then for $t>0$
\[         \PP\{\bar X - \mu \ge t \} \le \e^{-2n^2t^2/\sum_{i=1}^n (b_i-a_i)^2}.                  \]

If we assume that the $X_i \sim \bin 1  q$  are copies of the Bernoulli random variable with success probability $q$, then
$B=n \bar X$ has the binomial distribution $B \sim \bin n q$. The above inequality can be written in the form:
\[         \PP\{B - nq \ge nt \} \le \exp\left[-2n t^2   \right].               \]

If $m>n\mu=nq$ and we set $c=\frac{m-n\mu}{n}$ then:

\begin{align*}
\PP(B \ge m) &= \PP(B \ge n\mu+nc) \\
                                &= \PP(B - n\mu \ge nc) \\
                                & \le \exp\left[-2n c^2 \right] \\
                                & \le \exp\left[-2n \left( \frac{m-n\mu}{n} \right)^2  \right]\\
                                & \le \exp\left[-\frac{2}{n} ( m -nq) ^2  \right].\\
\end{align*}
This finishes the proof of the second inequality above.

Next, let $m<n\mu$. We have

\begin{align*}
\PP(B \le m) &= \sum_{i=0}^m {n\choose i} q^i (1-q)^{n-i}\qquad ( \textrm{Set}\quad j= n-i\ ) \\
                              &= \sum_{j=n-m}^n {n\choose n-j} q^{n-j} (1-q)^{j} \\
                               &= \sum_{j=n-m}^n {n\choose j}  (1-q)^{j} q^{n-j}\\
                               &=\PP(\hat B \ge n- m), \qquad\textrm{where}\quad \hat B \sim \bin n {1-q}.
\end{align*}

Note that since $m<n\mu=nq$, we have $n-m>n-nq=n(1-q)$ which is the expected value of $\hat B$.  Applying the second inequality proved above, we have

\begin{align*}
\PP(\hat B  \ge n- m) &\le \exp\left[-2n \left( \frac{n-m}{n} -(1-q)\right) ^2 \right] \\
                                       &= \exp\left[-\frac{2}{n} ( nq - m) ^2  \right].
\end{align*}

This finishes the proof of the first inequality.

\end{proof}



\begin{thebibliography}{5}

\bibitem{Aigner} M. Aigner, A Course in Enumeration, Graduate  texts in Mathematics, Spriner 2007.


\bibitem{ALLM} L. Aronshtam, N. Linial, T. Łuczak and R. Meshulam,{\sl  Collapsibility and Vanishing of Top Homology in Random Simplicaial Complexes}, Dicrete Comput. Geom. (2013), 49: 317--334.


\bibitem{C}  G. Carlsson, {\sl Topology and Data}, Bull. of the A.M.S., Vol. 46, 2009, pp. 255-308.



\bibitem{ER} P. Erd\H os and A. R\'eney,  On random graphs, I. Publ. Math. Debrecen 6, (1959), p. 290--297.

\bibitem{FK} A. Frieze and M. Karo\'nski, Introduction to Random Graphs, Cambridge University Presss, rev. Feb.  5, 2023.

\bibitem{H} W. Hoeffding, {\sl Probability Inequalities for Sums of Bounded Random Variables}, J. Amer.  Statist. Assoc. 58 (1963), 13-30.

\bibitem{K1} M. Kahle, {\sl The neighborhood complex of a random graph}, J. Combin. Theory Ser. A 114 (2007), 380--387

\bibitem{AK} A. Klenke, {\sl Probability Theory, a comprehensive Course}, Second ed. Springer, 2014.

\bibitem{LM1} N. Linial and R. Meshulam, {\sl Homological connectivity of Random 2-complexes}, Combinatorica 26 (4) (2006), 475-487.

\bibitem{MS} W. Matysiak and J. Spali\'nski, {\sl The asymptotic topology of the multineighbor complex of a random graph},  	arXiv:2302.09017.

\end{thebibliography}
\end{document}